\renewcommand{\thesection}{\Roman{section}}
\renewcommand \thesubsection{\Roman{section}.\arabic{subsection}}
\titleformat{\section}[block]{\large\scshape\fontsize{12}{17}\bfseries}{\thesection.}{1em}{} 
\titleformat{\subsection}[block]{\large}{\thesubsection.}{1em}{} 
\newtheoremstyle{thm}{1.5ex}{1.5ex}{\itshape\rmfamily}{}
{\bfseries\rmfamily}{}{2ex}{}
\newtheoremstyle{rem}{1.3ex}{1.3ex}{\rmfamily}{}
{\itshape}
{} {1.5ex}{}
\theoremstyle{thm}
\newtheorem{theorem}{Theorem}[section]
\newtheorem{lemma}[theorem]{Lemma}
\newtheorem*{Main Theorem}{Main Theorem.}
\newtheorem{corollary}[theorem]{Corollary}
\newtheorem{definition}{Definition}
\theoremstyle{rem}
\title{\vspace{-5mm}\fontsize{24pt}{10pt}\selectfont\textbf{Reconstruction for the Asymmetric Ising Model on Regular Trees}} 
\author[*]{Wenjian Liu}
\author[$\dag$]{Ning Ning}
\affil[*]{\small Department of Mathematics and Computer Science,
Queensborough Community College, City University of New York,
U.S.A., wjliu@qcc.cuny.edu} \affil[$\dag$]{Department of Statistics
and Applied Probability, University of California, Santa Barbara,
U.S.A., ning@pstat.ucsb.edu}
\begin{document}

\date{} 
\maketitle 

\thispagestyle{fancy} 


\begin{multicols}{2} 

\textbf{Keywords:} Asymmetric binary channel; Kesten-Stigum
reconstruction bound; Markov random
fields on trees\\

\textbf{\large Abstract}

It is known that the Kesten-Stigum reconstruction bound is tight for
roughly symmetric binary channels. In this paper, we will adopt a
refined analysis of moment recursion on a weighted version of the
magnetization, which is engaged in~\cite{S} to handle the symmetric
Potts model, and establish the critical condition of the asymmetric
Ising model to make Kesten-Stigum bound the reconstruction threshold
on regular $d$-ary trees.

\section{Introduction}
\subsection{Basic definitions} 
We start with the following broadcasting process that stands as a
discrete, irreducible, aperiodic, and reversible Markov chain. Let
$\mathbb{T}=(\mathbb{V}, \mathbb{E}, \rho)$ be a tree with nodes
$\mathbb{V}$, edges $\mathbb{E}$ and root $\rho\in \mathbb{V}$. Each
edge of the tree acts as a channel on a finite characters set
$\mathcal{C}$, whose elements are configurations on $\mathbb{T}$,
denoted by $\sigma$. Next set a probability transition matrix
$\mathbf{M}=(M_{ij})$ as the noisy communication channel on each
edge. The state of the root $\rho$, denoted by $\sigma_\rho$, is
chosen according to an initial distribution $\pi$ on $\mathcal{C}$.
This symbol is then propagated in the tree as follows. For each
vertex $v$ having as a parent $u$, the spin at $v$ is defined
according to the probabilities
\begin{equation}
\mathbf{P}(\sigma_v=j\mid\sigma_u=i)=M_{i j}
\end{equation}
with $i, j \in \mathcal{C}$. Roughly speaking, the problem of
reconstruction is to investigate whether the symbols received at the
vertices of the $n$th generation contain a non-vanishing information
transmitted by the root as $n$ goes to $\infty$. The following is
the formal definition of the reconstruction.
\begin{definition}
The reconstruction problem for the infinite tree $\mathbb{T}$ is
\emph{solvable} if for some $i, j\in\mathcal{C}$,
\begin{equation}
\limsup_{n\to\infty}d_{TV}(\sigma^i(n), \sigma^j(n))>0
\end{equation}
where $d_{TV}$ is the total variation distance. When
the $\limsup$ is $0$ we will say the model has
\emph{non-reconstruction} on $\mathbb{T}$.
\end{definition}

This paper will restrict to regular $d$-ary trees, i.e., the
infinite rooted tree where every vertex has exactly $d$ offspring.
Let $\sigma(n)$ denote the spins at distance $n$ from the root and
let $\sigma^i(n)$ denote $\sigma(n)$ conditioned on $\sigma_\rho =
i$. The objective model taken into account is the asymmetric binary
channel with the configuration set $\mathcal{C}=\{1, 2\}$, whose
transition matrix is of the form
\begin{equation}
\mathbf{M}= \frac{1}{2} \left[\left(
  \begin{array}{cc}
    1+\theta & 1-\theta \\
    1-\theta & 1+\theta \\
  \end{array}
\right) + \Delta\left(
  \begin{array}{cc}
    -1 & 1 \\
    -1 & 1 \\
  \end{array}
\right)\right],
\end{equation}
where $\Delta$ is used to describe the deviation of $\mathbf{M}$
from the symmetric channel and obviously there is a restriction of
$|\theta|+|\Delta|\leq 1$. Actually the process of broadcasting on a
tree with the channels $\mathbf{M}$ corresponds to the ferromagnetic
Ising model with external field on the tree. Furthermore it is
apparent that the second eigenvalue of the channel $\mathbf{M}$ is
$\theta$ which plays a crucial role in the reconstruction problem.

\subsection{Background}
Determining the reconstruction threshold of a Markov random field in
probability, as the interdisciplinary subject, has attracted more
and more attention from probabilists, statistical physicists,
biologists, etc. In fact, the investigation of the reconstruction
problem originated from spin systems in statistical physics by
establishing that the reconstruction threshold happens to be the
threshold for extremality of the infinite-volume Gibbs measure with
free boundary conditions~\cite{GE}. It is shown that the
reconstruction bound determines the efficiency of the Glauber
dynamics on trees and random graphs~\cite{BE,MA,T}, for example, the
mixing time for the Glauber dynamics undergoes a phase transition at
the reconstruction threshold. The reconstruction threshold is also
believed to play an important role in a variety of other contexts,
such as the efficiency of reconstructing phylogenetic ancestors in
evolutionary biology~\cite{MO2}, communication theory in the study
of noisy computation~\cite{EV}, network tomography~\cite{BH} (derive
the link delays in the interior from end-to-end delays in a computer
network), etc.

It is well known that the reconstruction solvability result when
$d|\theta|^2>1$ for any channel~\cite{KS}. Specially for the binary
symmetric channel, it was shown in~\cite{BL} that $d\theta^2>1$ is
not only the sufficient but necessary condition for the
reconstruction solvability, which we refer to as the Kesten-Stigum
bound. As for all other channels, proving non-reconstructibility
turned out to be harder. Although coupling arguments easily yield
nonreconstruction, these arguments are typically not tight.
Mossel~\cite{MO1,MO3} showed that the Kesten-Stigum bound is not the
bound for reconstruction in the binary-asymmetric model with
sufficiently large asymmetry or in the Potts model with sufficiently
many characters, opening a window to exploit the tightness of the
Kesten-Stigum bound.

In~\cite{S}, the Potts model was completely investigated by means of
the recursive structure of the tree, and more importantly, the
author engaged the refined recursive equations of vector-valued
distributions and concentration analyses to confirm much of the
picture predicted by M\'{e}zard and Montanari~\cite{MM}.

But exact thresholds for non-solvability of the asymmetric Ising
model had not been known until~\cite{BO}, in which Borgs et al
displayed a delicate analysis of the moment recursion on a weighted
version of the magnetization, and thus achieved a breakthrough
result. However this conclusion has just established the existence
of the sufficiently small $\Delta$ without estimating the range of
the symmetry bias to keep Kesten-Stigum bound tight.

\subsection{Main results}
Inspired by Sly~\cite{S}'s work, we are able to present the critical
relationship between $\Delta$ and $\theta$ to preserve tightness of
the Kesten-Stigum bound. Since $d\theta^2>1$ always guarantees the
reconstruction, it suffices to consider $1/2\leq d\theta^2\leq1$ in
the following context.
\begin{theorem}
\label{reconstruction} When $\Delta^2>(1-\theta)^2/3$, for every $d$
the Kesten-Stigum bound is not tight. In other words, the
reconstruction problem is solvable for some $\theta$ even if
$d\theta^2<1$.
\end{theorem}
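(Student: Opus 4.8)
The plan is to prove solvability by showing that a suitably weighted magnetization of the root does not decay, which by the standard equivalence forces $\limsup_{n}d_{TV}(\sigma^1(n),\sigma^2(n))>0$. For a vertex $v$, write $T_v$ for the subtree hanging from $v$ and $\partial_n T_v$ for its $n$th generation, let $\pi=(\pi_1,\pi_2)=\big(\tfrac{1-\theta-\Delta}{2(1-\theta)},\tfrac{1-\theta+\Delta}{2(1-\theta)}\big)$ be the stationary distribution of the channel~$\mathbf{M}$ in~(3) (so that the prior magnetization is $|\pi_1-\pi_2|=|\Delta|/(1-\theta)$), put $x_v=\mathbf{P}(\sigma_v=1\mid\sigma_{\partial_n T_v})$, and let $Z_v$ be a fixed weighted recentering of $x_v$ about $\pi_1$, as in the magnetization analysed in~\cite{BO}. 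It suffices to show that $q_n:=\mathbf{E}[Z_\rho^2]$ stays bounded away from $0$ as $n\to\infty$; since we are in the regime $d\theta^2<1$ this cannot be produced by linear growth, so the argument must exploit a nonlinear feature of the recursion created by the asymmetry.

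First I would write down the tree recursion expressing $Z_v$, after one broadcast step, as an explicit function of the children's variables $Z_{v_1},\dots,Z_{v_d}$ and the matrix~(3) — the vector-valued distributional recursion of~\cite{S,BO} specialized to $\mathcal C=\{1,2\}$. Expanding this map around the trivial fixed point $Z\equiv0$ and taking expectations yields, schematically,
\[
q_{n+1}=d\theta^2\,q_n+A(\theta,\Delta)\,q_n^2+(\text{higher-order and cross terms}),
\]
where the linear coefficient $d\theta^2$ reproduces the Kesten--Stigum exponent and the quadratic coefficient $A(\theta,\Delta)$ is generated entirely by $\Delta\neq0$. To see that the omitted terms are genuinely lower order while $Z$ is small, I would carry along the first- and third-moment recursions for $\mathbf{E}[Z_v\mid\sigma_v=i]$ and $\mathbf{E}[Z_v^3\mid\sigma_v=i]$: since $\theta$ is the second eigenvalue of $\mathbf{M}$, the conditional first moment contracts at rate $\theta$ per level, and this is what lets one bound the cross terms.

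The crux is the sign and size of $A(\theta,\Delta)$. Carrying out the second-order expansion carefully, $A$ should be a positive multiple of $\Delta^2/(1-\theta)^2-1/3$ (the $(1-\theta)^{-1}$ entering through the normalization of $\pi$, and the $1/3$ coming from the variance/third-moment bookkeeping), so that $A>0$ precisely when $\Delta^2>(1-\theta)^2/3$, equivalently when the prior magnetization exceeds $1/\sqrt3$. When $A>0$, the trivial fixed point of the map $\phi(q)=d\theta^2 q+A q^2+\cdots$ has a basin of attraction of radius comparable to $(1-d\theta^2)/A$, which shrinks to $0$ as $\theta\uparrow 1/\sqrt d$, while the leaf-level initial value $q_0$ (a fixed channel quantity that stays bounded below for an appropriate $\Delta$ satisfying the hypothesis) remains of order $1$; hence, choosing $\theta$ close enough to $1/\sqrt d$, the sequence $q_n$ escapes that basin and, being bounded ($Z$ is bounded), converges to a positive fixed point — so $q_n\not\to0$ and reconstruction is solvable even though $d\theta^2<1$, for every $d$. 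I expect the principal obstacle to be exactly this second-order analysis: establishing the sign of the quadratic coefficient with the correct threshold constant, and bounding uniformly in $n$ the higher moments entering the error terms; a secondary technical point is the concentration/contraction step, in the spirit of~\cite{S}, required to turn the persistence of $q_n$ into the total-variation statement of the definition of solvability.
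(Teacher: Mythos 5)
Your proposal follows essentially the same route as the paper: the paper's $x_n=\mathbf{E}(X^+(n)-\pi_1)=\tfrac{1}{\pi_1}\mathbf{E}(X_1-\pi_1)^2$ is exactly your second moment $q_n$, its expansion $x_{n+1}=d\theta^2x_n+\tfrac{1-6\pi_1\pi_2}{\pi_1\pi_2^2}\tfrac{d(d-1)}{2}\theta^4x_n^2+R+S+T$ is your recursion with quadratic coefficient proportional to $1-6\pi_1\pi_2=\tfrac32\bigl(\Delta^2/(1-\theta)^2-1/3\bigr)$, and the endgame (push $\theta$ up to $d^{-1/2}$ so the positive quadratic term compensates the subcritical linear term at the scale of the order-one initial value, then induct) is the same. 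The only ingredient you leave implicit is the paper's Lemma IV.2 ($x_{n+1}\ge\gamma x_n$), needed to guarantee the sequence never drops below the escape level before the error estimates kick in.
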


Furthermore with the assistance of the central limit theorem and
gaussian approximation, we figure out the precise condition to keep
the tightness of the Kesten-Stigum bound for fixed $\pi$ and large
$d$.
\begin{theorem}
\label{nonreconstruction} When $\Delta^2<(1-\theta)^2/3$, there
exists a $D=D(\pi)>0$ such that for $d>D$ the Kesten-Stigum bound is
sharp. Furthermore there is non-reconstruction at the Kesten-Stigum
bound, when $d\theta^2=1$.
\end{theorem}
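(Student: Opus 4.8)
The plan is to study the distributional recursion for the posterior probabilities $x_v = \mathbf{P}(\sigma_\rho = 1 \mid \sigma(n)_{T_v})$ (equivalently, a weighted version of the magnetization as in \cite{BO} and \cite{S}), and to track a carefully chosen second-moment-type quantity $X_n$ that measures how far the conditional distribution at level $n$ is from the stationary distribution $\pi$. The reconstruction problem is non-solvable precisely when $X_n \to 0$. Writing the recursion for $X_{n+1}$ in terms of $X_n$ by combining the $d$ independent subtrees, one gets a relation of the schematic form $X_{n+1} = d\theta^2 X_n - c\, X_n^{3/2}(\text{or }X_n^2) + (\text{higher order})$, where the sign and the size of the correction term are governed by the asymmetry parameter $\Delta$ and the bias $\theta$. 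The point is that when $d\theta^2$ is at or slightly above $1$, the fate of the iteration is decided by the first correction term, and the condition $\Delta^2 < (1-\theta)^2/3$ is exactly what makes that correction act in the contracting direction.

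First I would set up the weighted magnetization and derive the exact moment recursion, isolating the leading term $d\theta^2 X_n$ and computing the coefficient of the next-order term as an explicit function of $\theta$, $\Delta$ and $\pi$; the sign of this coefficient is where the threshold $(1-\theta)^2/3$ enters. Second, I would use a central-limit-theorem / Gaussian-approximation argument (this is the step borrowed from \cite{S} and refined): for large $d$, the sum over the $d$ subtrees of the per-subtree log-likelihood increments is approximately Gaussian, which lets me replace the messy exact recursion by a tractable one-dimensional map whose behavior near the fixed point $X=0$ I can analyze. Third, I would show that under $\Delta^2 < (1-\theta)^2/3$ this map has $0$ as the only stable fixed point in the relevant range once $d > D(\pi)$, so $X_n \to 0$ and non-reconstruction holds whenever $d\theta^2 \le 1$; combined with the Kesten-Stigum solvability for $d\theta^2 > 1$ from \cite{KS}, this gives sharpness. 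Finally, the boundary case $d\theta^2 = 1$ requires extra care because the linear term is exactly marginal: here I would show the correction term still drives $X_n$ to $0$ (polynomially rather than exponentially), using the strict inequality $\Delta^2 < (1-\theta)^2/3$ to keep the correction coefficient bounded away from zero, and a monotonicity/comparison argument to push the iteration down to $0$.

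The main obstacle I expect is controlling the error in the Gaussian approximation \emph{uniformly} in $n$, not just for a single step: one needs the concentration estimates for the empirical distribution of the $d$ subtree messages to hold with bounds that survive iteration, so that the higher-order remainder in the recursion does not accumulate and swamp the favorable sign of the leading correction. This is precisely the kind of delicate concentration analysis carried out in \cite{S} for the Potts model, and the work here is to check that it transfers to the asymmetric binary channel with the dependence on $\pi$ made explicit enough to extract the threshold $D(\pi)$, and tight enough to handle the marginal case $d\theta^2 = 1$.
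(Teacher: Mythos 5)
Your overall architecture matches the paper's: an exact moment recursion of the form $x_{n+1}=d\theta^2x_n+\frac{1-6\pi_1\pi_2}{\pi_1\pi_2^2}\frac{d(d-1)}{2}\theta^4x_n^2+R+S+T$, whose quadratic coefficient changes sign exactly at $\Delta^2=(1-\theta)^2/3$ (i.e.\ at $1-6\pi_1\pi_2=0$), combined with a Gaussian approximation of the $d$-fold product for large $d$. The gap is in how you combine the two tools. In your third step you want to conclude $X_n\to 0$ from the fixed-point structure of the Gaussian-approximated map, but the CLT/Portmanteau error in that approximation is an \emph{additive} constant $\varepsilon$ for fixed large $d$, not a quantity that scales with $x_n$: near the fixed point $0$ the bound $x_{n+1}\le f(d\theta^2x_n)+\varepsilon$ only yields $x_{n+1}\le\varepsilon+o(1)$ and cannot force convergence to $0$. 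You correctly flag uniform-in-$n$ error control as the main obstacle, but the remedy is not sharper concentration in the Gaussian step --- a multiplicative CLT error near $x_n=0$ is not available --- and so the fixed-point argument on the surrogate map alone cannot close the proof.

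The paper resolves this with a two-regime argument. While $x_n\ge\delta$ it uses the Gaussian map: since $s-f(s)$ is continuous and strictly positive on $[\delta,\pi_2]$ (its Lemma V.7, $f(s)<s$) and $f$ is increasing, one gets $x_{n+1}\le f(x_n)+\varepsilon\le x_n-\varepsilon$ with $\varepsilon=\frac12\min_{s\ge\delta}(s-f(s))$, so after finitely many steps $x_m<\delta$. Once $x_n<\delta$ it abandons the Gaussian surrogate and returns to the exact expansion, bounding each of $R$, $S$, $T$ by one sixth of the (now negative) quadratic term via the concentration of $z_n/x_n$ and of $\pi_1Z_1/(\pi_1Z_1+\pi_2Z_2)$ around $\pi_1$; this gives $x_{n+1}\le d\theta^2x_n-c'x_n^2\le x_n$, so $x_n$ decreases to a limit $L$ satisfying $L\le d\theta^2L-c'L^2$, forcing $L=0$ even in the marginal case $d\theta^2=1$. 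Your intuition for the marginal case (polynomial decay driven by the correction term) is right, but it must be run on the exact recursion, not the Gaussian map. A further technical point you need but did not name: the threshold $\delta$ for the small-$x_n$ regime must be chosen independently of $d$ (the paper arranges $D=D(\pi)$ precisely so that the remainder bounds hold with $\delta=\delta(\pi)$), otherwise the two regimes need not meet.
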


\section{Main ideas of the proof}
\subsection {Notations}
Note first that the stationary distribution $\pi=(\pi_1, \pi_2)$ of
$\mathbf{M}$ is given by
\begin{equation}
\pi_1=\frac{1}{2}-\frac{\Delta}{2(1-\theta)}\quad \text{and}\quad
\pi_2=\frac{1}{2}+\frac{\Delta}{2(1-\theta)},
\end{equation}
and without loss of generality, it is convenient to assume
$\pi_1\geq\pi_2$. Let $u_1,\ldots,u_d$ be the children of $\rho$ and
$\mathbb{T}_v$ be the subtree of descendants of $v\in \mathbb{T}$.
Furthermore, if we set $d(\cdot, \cdot)$ as the graph-metric
distance on $\mathbb{T}$, denote the $n$th level of the tree by
$L(n)=\{v\in\mathbb{V}: d(\rho, v)=n\}$. With the notation above,
let $\sigma(n)$ and $\sigma_j(n)$ denote the spins on $L_n$ and
$L(n)\cap \mathbb{T}_{u_j}$ respectively. For a configuration $A$ on
$L(n)$ define the posterior function by
\begin{equation}
f_n(i, A)=\mathbf{P}(\sigma_\rho=i\mid\sigma(n)=A).
\end{equation}
By the recursive nature of the tree for a configuration $A$ on
$L(n+1) \cap \mathbb{T}_{u_j}$ we can give the equivalent form of
the previous one
\begin{equation}
f_n(i, A)=\mathbf{P}(\sigma_{u_j}=i\mid\sigma_j(n+1)=A).
\end{equation}
Now for $i=1, 2$ and $1\leq j\leq d$ define
\begin{equation*}
X_i=X_i(n)=f_n(i, \sigma(n));\quad Y_j=Y_j(n)=f_n(1,
\sigma_j^1(n+1))
\end{equation*}
and
\begin{equation*}
X^+=X^+(n)=f_n(1, \sigma^1(n));\quad X^-=X^-(n)=f_n(2, \sigma^2(n)).
\end{equation*}
And it is clear that the random variables $\{Y_j\}_{1\leq j\leq d}$
are independent and identical in distribution. Last introduce the
objective quantities in this paper:
\begin{equation}
x_n=\mathbf{E}(X^+(n)-\pi_1)=\mathbf{E}f_n(1,\sigma^1(n))-\pi_1
\end{equation}
and
\begin{equation}
z_n=\mathbf{E}(X^+(n)-\pi_1)^2=\mathbf{E}(f_n(1,\sigma^1(n))-\pi_1)^2.
\end{equation}
Referring to~(\cite{MO1}, Proposition~14), it suffices to
investigate the asymptotic behavior of $x_n$ as $n$ goes to
infinity. Then we can establish the equivalent condition for
non-reconstruction.
\begin{lemma}
The non-reconstruction is equivalent to
$$
\lim_{n\to\infty}x_n=0.
$$
\end{lemma}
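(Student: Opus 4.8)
The plan is to compare $x_n$ with the total‑variation distance $d_{TV}(\sigma^1(n),\sigma^2(n))$. Since $\mathcal{C}=\{1,2\}$, this is the only quantity that matters in the definition of reconstruction: the pairs $(i,i)$ give distance zero, and $(2,1)$ gives the same number as $(1,2)$. As $d_{TV}\ge 0$, a $\limsup$ equal to zero is the same as a limit equal to zero, so non‑reconstruction is exactly $\lim_{n\to\infty}d_{TV}(\sigma^1(n),\sigma^2(n))=0$, and it suffices to prove
\[
\lim_{n\to\infty}d_{TV}(\sigma^1(n),\sigma^2(n))=0\ \Longleftrightarrow\ \lim_{n\to\infty}x_n=0 .
\]

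The first step is to put $x_n$ in closed form via Bayes' rule. Write $p_n^i(A)=\mathbf{P}(\sigma(n)=A\mid\sigma_\rho=i)$ and $p_n(A)=\pi_1 p_n^1(A)+\pi_2 p_n^2(A)=\mathbf{P}(\sigma(n)=A)$, so that $f_n(1,A)=\pi_1 p_n^1(A)/p_n(A)$ and, under the law of $\sigma^1(n)$, $x_n=\sum_A p_n^1(A)\bigl(f_n(1,A)-\pi_1\bigr)$. A short manipulation, using $\sum_A\bigl(p_n^1(A)-p_n^2(A)\bigr)=0$, gives
\[
x_n=\frac{1}{\pi_1}\,\mathrm{Var}\bigl(f_n(1,\sigma(n))\bigr)=\pi_1\pi_2^2\sum_A\frac{\bigl(p_n^1(A)-p_n^2(A)\bigr)^2}{p_n(A)} ,
\]
the variance being under the unconditional measure; in particular $x_n\ge 0$, and the sum on the right is, up to the positive factor $\pi_1\pi_2^2$, a $\chi^2$‑divergence between $\sigma^1(n)$ and $\sigma^2(n)$.

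The second step is to sandwich $x_n$ between two powers of $d_{TV}(\sigma^1(n),\sigma^2(n))$. For the upper bound, rewrite $x_n=\pi_2\sum_A f_n(1,A)\bigl(p_n^1(A)-p_n^2(A)\bigr)$ and use $0\le f_n(1,\cdot)\le 1$ to get $x_n\le\pi_2\,d_{TV}(\sigma^1(n),\sigma^2(n))$; for the lower bound, Cauchy--Schwarz applied to the $\chi^2$‑sum gives $\sum_A(p_n^1-p_n^2)^2/p_n\ge\bigl(\sum_A|p_n^1-p_n^2|\bigr)^2=4\,d_{TV}(\sigma^1(n),\sigma^2(n))^2$, whence $x_n\ge 4\pi_1\pi_2^2\,d_{TV}(\sigma^1(n),\sigma^2(n))^2$. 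Thus
\[
4\pi_1\pi_2^2\,d_{TV}(\sigma^1(n),\sigma^2(n))^2\ \le\ x_n\ \le\ \pi_2\,d_{TV}(\sigma^1(n),\sigma^2(n)) .
\]
Under the standing hypotheses one has $0<\pi_2\le\pi_1<1$ (the degenerate endpoints $|\Delta|=1-\theta$, where a state becomes absorbing, are excluded), so the two implications $x_n\to 0\Leftrightarrow d_{TV}(\sigma^1(n),\sigma^2(n))\to 0$ follow at once; combined with the first paragraph this proves the lemma. The same equivalence is also recorded in~(\cite{MO1}, Proposition~14).

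I do not anticipate a genuine difficulty: the only points requiring care are carrying the Bayes computation through so that it lands precisely on the $\chi^2$‑divergence, and keeping the comparison two‑sided with the correct asymmetric exponents ($d_{TV}$ on the upper side, $d_{TV}^2$ on the lower side), so that what one obtains is an equivalence rather than a single implication.
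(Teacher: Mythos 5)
Your proof is correct, but it takes a different route from the paper's. The paper compares $x_n$ with the success probability of the maximum-likelihood reconstruction, $\Delta_n=\mathbf{E}\max\{X_1(n),X_2(n)\}$, proving the sandwich $x_n\leq\Delta_n-\pi_1\leq\pi_1^{1/2}x_n^{1/2}$ (the upper bound via Cauchy--Schwarz applied to the variance identity $x_n=\frac{1}{\pi_1}\mathbf{E}(X_1-\pi_1)^2$), and then invokes Mossel's result that $\Delta_n\to\pi_1$ is equivalent to non-reconstruction. You instead relate $x_n$ directly to $d_{TV}(\sigma^1(n),\sigma^2(n))$, which is the quantity appearing in the paper's Definition 1: your Bayes computation landing on $x_n=\pi_1\pi_2^2\sum_A(p_n^1-p_n^2)^2/p_n$ is a correct $\chi^2$-divergence identity (it is the same variance identity as the paper's, rewritten), and both halves of your sandwich $4\pi_1\pi_2^2\,d_{TV}^2\leq x_n\leq \pi_2\,d_{TV}$ check out --- the upper bound because $\sup_{0\leq f\leq 1}\sum_A f(A)(p_n^1(A)-p_n^2(A))=d_{TV}$, the lower bound by Cauchy--Schwarz against $\sum_A p_n(A)=1$. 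What your approach buys is self-containedness: it connects $x_n$ to the definition of reconstruction without routing through the external equivalence (\cite{MO1}, Proposition 14), at the cost of not exhibiting the operational meaning of $x_n$ as (essentially) the excess success probability of the optimal reconstruction algorithm, which the paper reuses later (e.g.\ in the proof of Lemma IV.2, where the bound $\Delta_{n+1}\leq\pi_1+\pi_1^{1/2}x_{n+1}^{1/2}$ from this very proof is applied again). Your caveat about the degenerate endpoints $\pi_2=0$ is appropriate and implicitly assumed throughout the paper.
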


\begin{proof}
The maximum-likelihood algorithm, which is the optimal
reconstruction algorithm of $\sigma_\rho$ given $\sigma(n)$, is
successful with probability
\begin{equation}
\Delta_n=\mathbf{E}\max\{X_1(n), X_2(n)\}.
\end{equation}
Therefore it follows immediately the inequality of $x_n+\pi_1\leq
\Delta_n$. On the other side, recalling the assumption of
$\pi_1\geq\pi_2$, we could apply Cauchy-Schwartz inequality, in
tandem with the identity~\eqref{identityxn} to conclude
\begin{equation}
\label{MLE}
\begin{aligned}
\Delta_n&=\pi_1+\mathbf{E}\max\left\{X_1(n)-\pi_1,
X_2(n)-\pi_1\right\}
\\
&\leq\pi_1+\mathbf{E}\max\left\{X_1(n)-\pi_1, X_2(n)-\pi_2\right\}
\\
&=\pi_1+\mathbf{E}|X_1(n)-\pi_1|
\\
&\leq\pi_1+\left(\mathbf{E}(X_1(n)-\pi_1)^2\right)^{1/2}
\\
&\leq\pi_1+\pi_1^{1/2}x_n^{1/2}.
\end{aligned}
\end{equation}
To sum up, we come up with the inequalities
\begin{eqnarray*}
x_n\leq\Delta_n-\pi_1\leq\pi_1^{1/2}x_n^{1/2},
\end{eqnarray*}
implying that $\lim_{n\to\infty}x_n=0$ is equivalent to
$\lim_{n\to\infty}\Delta_n=\pi_1$, which is in turn equivalent to
non-reconstruction~\cite{MO1}.
\end{proof}

\subsection{Preparations}
Before giving the the outline of the proof, it is convenient to
derive some basic identities concerning $x_n$. First we reveal the
relation between the first and second moments of $X^+$.
\begin{lemma}
For any $n\in \mathbb{N}\cup\{0\}$, we have \begin{eqnarray*}
x_n&=&\frac{1}{\pi_1}\mathbf{E}(X_1-\pi_1)^2
\\
&=&\mathbf{E}(X^+(n)-\pi_1)^2+\frac{\pi_2}{\pi_1}\mathbf{E}(X^-(n)-\pi_2)^2
\\
&\geq& z_n \geq 0.
\end{eqnarray*}

\end{lemma}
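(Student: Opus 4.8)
The plan is to exploit the basic symmetry identities between the posterior variables $X_1, X_2$ and the one-sided variables $X^+, X^-$, which follow from conditioning on the root state. First I would establish the Bayes-type identity relating the law of $X_1$ to the laws of $X^+$ and $X^-$: since $X_1(n) = f_n(1,\sigma(n)) = \mathbf{P}(\sigma_\rho = 1 \mid \sigma(n))$, the distribution of $X_1(n)$ under the unconditional measure decomposes according to $\sigma_\rho$, and the conditional law of $X_1(n)$ given $\sigma_\rho = 1$ is exactly the law of $X^+(n)$, while given $\sigma_\rho = 2$ it is the law of $1 - X^-(n)$. This yields for any test function $g$ the identity $\mathbf{E}\, g(X_1) = \pi_1 \mathbf{E}\, g(X^+) + \pi_2 \mathbf{E}\, g(1 - X^-)$.

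The key arithmetic step is the ``first-moment'' identity. Taking $g(x) = x$ above and using that $\mathbf{E}(X_1 \mid \sigma(n) = A) $ has unconditional mean $\pi_1$ (since $\mathbf{E} f_n(1,\sigma(n)) = \mathbf{P}(\sigma_\rho = 1) = \pi_1$), one gets the relation referenced in the lemma's proof as \eqref{identityxn}, namely $\mathbf{E}(X^+ - \pi_1) = \frac{\pi_2}{\pi_1}\mathbf{E}(X^- - \pi_2)$, i.e. $\mathbf{E}(X^- - \pi_2) = \frac{\pi_1}{\pi_2} x_n$. I would then apply the decomposition to $g(x) = (x-\pi_1)^2$: the left side is $\mathbf{E}(X_1 - \pi_1)^2$; on the right, the first term is $\pi_1 \mathbf{E}(X^+ - \pi_1)^2 = \pi_1 z_n$, and the second term is $\pi_2 \mathbf{E}(1 - X^- - \pi_1)^2 = \pi_2 \mathbf{E}(X^- - \pi_2)^2$ because $1 - \pi_1 = \pi_2$. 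Hence $\mathbf{E}(X_1 - \pi_1)^2 = \pi_1 z_n + \pi_2 \mathbf{E}(X^- - \pi_2)^2$.

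It remains to connect $\mathbf{E}(X_1-\pi_1)^2$ back to $x_n$. Here I would use that $X_1 + X_2 = 1$ together with the analogous first-moment identity and a direct computation: expanding and using $\mathbf{E} X_1 = \pi_1$ gives $\mathbf{E}(X_1 - \pi_1)^2 = \mathbf{E} X_1^2 - \pi_1^2 = \mathbf{E}(X_1 \cdot \mathbf{1}_{\sigma_\rho = 1}) \cdot(\text{via the tower property on } \sigma(n)) - \pi_1^2$; more cleanly, $\mathbf{E}(X_1^2) = \mathbf{E}\big(\mathbf{P}(\sigma_\rho = 1\mid\sigma(n))^2\big) = \mathbf{E}\big(\mathbf{P}(\sigma_\rho=1\mid\sigma(n))\mathbf{1}_{\sigma_\rho=1}\big) = \pi_1 \mathbf{E}(X^+)$, so $\mathbf{E}(X_1 - \pi_1)^2 = \pi_1 \mathbf{E}(X^+) - \pi_1^2 = \pi_1(\mathbf{E} X^+ - \pi_1) = \pi_1 x_n$. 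This gives the first line $x_n = \frac{1}{\pi_1}\mathbf{E}(X_1 - \pi_1)^2$; combining with the second-moment decomposition above gives the second line $x_n = z_n + \frac{\pi_2}{\pi_1}\mathbf{E}(X^- - \pi_2)^2$; and since both terms on the right are nonnegative, $x_n \geq z_n \geq 0$, finishing the chain.

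The main obstacle is bookkeeping the Bayesian change of measure correctly — in particular verifying that conditioning $X_1(n)$ on $\sigma_\rho = 2$ produces the law of $1 - X^-(n)$ rather than $X^-(n)$, and tracking the resulting $\pi_1 \leftrightarrow \pi_2$ swaps through the square; once the identity $\mathbf{E}(X_1^2) = \pi_1 \mathbf{E}(X^+)$ is in hand (which is just the tower property applied to $\mathbf{1}_{\sigma_\rho=1}$), everything else is routine algebra.
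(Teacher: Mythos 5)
Your proposal is correct and follows essentially the same route as the paper: the identity $\mathbf{E}(X_1^2)=\pi_1\mathbf{E}X^+$ (which you get via the tower property with $\mathbf{1}_{\sigma_\rho=1}$, the paper via an explicit Bayes sum over configurations) gives $x_n=\frac{1}{\pi_1}\mathbf{E}(X_1-\pi_1)^2$, and the decomposition of $\mathbf{E}(X_1-\pi_1)^2$ by conditioning on the root state, using that $X_1\mid\{\sigma_\rho=2\}$ has the law of $1-X^-$, yields $x_n=z_n+\frac{\pi_2}{\pi_1}\mathbf{E}(X^--\pi_2)^2\geq z_n\geq 0$. The only quibble is cosmetic: the relation you call \eqref{identityxn} is actually the paper's second-moment identity $x_n=\frac{1}{\pi_1}\mathbf{E}(X_1-\pi_1)^2$, not the first-moment relation $x_n=\frac{\pi_2}{\pi_1}\mathbf{E}(X^--\pi_2)$, but this does not affect the argument.
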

\begin{proof}By Bayes' rule, we have
\begin{eqnarray}
\mathbf{E}X^+(n)&=&\mathbf{E}f_n(1,\sigma^1(n))\nonumber
\\
&=&\sum_A f_n(1,A)\mathbf{P}(\sigma(n)=A\mid\sigma_\rho=1)\nonumber
\\
&=&\frac{1}{\pi_1}\sum_Af_n(1,A)^2\mathbf{P}(\sigma(n)=A)\nonumber
\\
&=&\frac{1}{\pi_1}\mathbf{E}(X_1^2)
\end{eqnarray}
and similarly,
\begin{equation}
\mathbf{E}X^-=\mathbf{E}f_n\left(2,\sigma^2(n)\right)=\frac{1}{\pi_2}\mathbf{E}(X_2^2).
\end{equation}
Then it follows from the fact of $\mathbf{E}(X_1)=\pi_1$ that
\begin{equation}
\label{identityxn}
x_n=\frac{1}{\pi_1}\left(\mathbf{E}(X_1^2)-\pi_1^2\right)=\frac{1}{\pi_1}\mathbf{E}(X_1-\pi_1)^2.
\end{equation}
Next referring to the identity $X_1(n)+X_2(n)=1$ we obtain
\begin{equation}
x_n=\frac{1}{\pi_1}\mathbf{E}(X_2-\pi_2)^2
=\frac{\pi_2}{\pi_1}(\mathbf{E}X^-(n)-\pi_2).
\end{equation}
Last from~\eqref{identityxn} we present the quantitative relation
between $x_n$ and $z_n$:
\begin{eqnarray*}
x_n&=&\frac{1}{\pi_1}\left[\mathbf{P}(\sigma_\rho=1)\mathbf{E}\big((X_1-\pi_1)^2\mid\sigma_\rho=1\big)\right]
\\
&
&+\frac{1}{\pi_1}\left[\mathbf{P}(\sigma_\rho=2)\mathbf{E}\big((X_2-\pi_2)^2\mid\sigma_\rho=2\big)\right]
\\
&=&\frac{1}{\pi_1}\left[\pi_1\mathbf{E}(X^+(n)-\pi_1)^2+\pi_2\mathbf{E}(X^-(n)-\pi_2)^2\right]
\\
&=&\mathbf{E}(X^+(n)-\pi_1)^2+\frac{\pi_2}{\pi_1}\mathbf{E}(X^-(n)-\pi_2)^2
\\
&\geq&z_n\geq0.
\end{eqnarray*}
\end{proof}

Next with the preceding results, we could evaluate the means and
variances of $Y_j$.
\begin{lemma}
For each $1\leq j\leq d$, we have
$$
\mathbf{E}(Y_j-\pi_1)=\theta x_n \quad\textup{and}\quad
\mathbf{E}(Y_j-\pi_1)^2=\theta z_n+\pi_1(1-\theta)x_n.
$$
\end{lemma}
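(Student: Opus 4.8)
\emph{Proof proposal.} The plan is to unfold one level of the tree: conditioning on the spin $\sigma_{u_j}$ of the child $u_j$ converts $Y_j$ into an expression governed by the depth-$n$ posterior variables $X^+$ and $X^-$, whose first two centered moments are already controlled by the two preceding lemmas, and then to identify the resulting coefficients using the explicit form of $\mathbf{M}$ and $\pi$.

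\textbf{Step 1 (one-step conditioning).} Since $\sigma_j^1(n+1)$ is the configuration on $L(n+1)\cap\mathbb{T}_{u_j}$ under the law $\mathbf{P}(\,\cdot\mid\sigma_\rho=1)$, I would first record that for any bounded $g$,
$$\mathbf{E}\,g\big(\sigma_j^1(n+1)\big)=M_{11}\,\mathbf{E}\,g\big(\sigma^1(n)\big)+M_{12}\,\mathbf{E}\,g\big(\sigma^2(n)\big),$$
by the Markov property on $\mathbb{T}$: given $\sigma_{u_j}=k$ the configuration $\sigma_j(n+1)$ is conditionally independent of $\sigma_\rho$ and is an exact copy of the depth-$n$ broadcast $\sigma^k(n)$ in the subtree rooted at $u_j$, while $\mathbf{P}(\sigma_{u_j}=k\mid\sigma_\rho=1)=M_{1k}$. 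Applying this with $g=f_n(1,\cdot)$ and with $g=(f_n(1,\cdot)-\pi_1)^2$ gives
$$\mathbf{E}(Y_j)=M_{11}\,\mathbf{E}f_n\big(1,\sigma^1(n)\big)+M_{12}\,\mathbf{E}f_n\big(1,\sigma^2(n)\big)$$
and the analogous identity for $\mathbf{E}(Y_j-\pi_1)^2$.

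\textbf{Step 2 (the ``wrong-label'' term).} Using $f_n(1,A)=1-f_n(2,A)$ I would rewrite $f_n(1,\sigma^2(n))=1-X^-$, so that $f_n(1,\sigma^2(n))-\pi_1=\pi_2-X^-=-(X^--\pi_2)$; hence $\mathbf{E}f_n(1,\sigma^2(n))=\pi_1-(\mathbf{E}X^--\pi_2)$ and $\mathbf{E}(f_n(1,\sigma^2(n))-\pi_1)^2=\mathbf{E}(X^--\pi_2)^2$. Then, inserting the identities from the previous lemma, namely $\mathbf{E}X^+-\pi_1=x_n$, $\mathbf{E}X^--\pi_2=\tfrac{\pi_1}{\pi_2}x_n$, $\mathbf{E}(X^+-\pi_1)^2=z_n$, and $\mathbf{E}(X^--\pi_2)^2=\tfrac{\pi_1}{\pi_2}(x_n-z_n)$, I obtain
$$\mathbf{E}(Y_j-\pi_1)=\Big(M_{11}-\tfrac{\pi_1}{\pi_2}M_{12}\Big)x_n,\qquad \mathbf{E}(Y_j-\pi_1)^2=\Big(M_{11}-\tfrac{\pi_1}{\pi_2}M_{12}\Big)z_n+\tfrac{\pi_1}{\pi_2}M_{12}\,x_n.$$

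\textbf{Step 3 (identify the coefficients) and the main obstacle.} Finally I would simplify the two coefficients from the explicit forms of $\mathbf{M}$ and $\pi$: stationarity $\pi\mathbf{M}=\pi$ gives $\pi_1M_{12}=\pi_2M_{21}$, so $\tfrac{\pi_1}{\pi_2}M_{12}=M_{21}=\tfrac12(1-\theta-\Delta)=\pi_1(1-\theta)$, while $M_{11}-M_{21}=\theta$; substituting yields exactly $\mathbf{E}(Y_j-\pi_1)=\theta x_n$ and $\mathbf{E}(Y_j-\pi_1)^2=\theta z_n+\pi_1(1-\theta)x_n$. The only genuinely delicate point is Step 1: one must argue carefully that after conditioning on $\sigma_{u_j}$ the subtree data is an exact copy of the depth-$n$ broadcast started from the conditioned value (this is where the self-similar recursive structure of $\mathbb{T}$ and the tower property are used), and that the correct mixing weights are the row entries $M_{1k}$ of the channel rather than the stationary probabilities $\pi_k$. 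Everything after that is bookkeeping with the identities of the two preceding lemmas and $2\times2$ algebra. \qed
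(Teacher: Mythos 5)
Your proposal is correct and follows essentially the same route as the paper: condition on the spin of $u_j$ (with weights $M_{11},M_{12}$), note that the subtree data is then a copy of the depth-$n$ broadcast $\sigma^1(n)$ or $\sigma^2(n)$ so that $Y_j$ is distributed as $X^+$ or $1-X^-$, and substitute the moment identities $\mathbf{E}X^--\pi_2=\tfrac{\pi_1}{\pi_2}x_n$ and $\mathbf{E}(X^--\pi_2)^2=\tfrac{\pi_1}{\pi_2}(x_n-z_n)$ from the preceding lemma before simplifying via $\tfrac{\pi_1}{\pi_2}M_{12}=M_{21}=\pi_1(1-\theta)$ and $M_{11}-M_{21}=\theta$. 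The paper states the one-step conditioning more tersely as an application of the total probability formula, but the substance is identical.
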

\begin{proof}
If $\sigma_{u_j}^1=1$, $Y_j$ is distributed according to $X^+(n)$,
while as $1-X^-(n)$ given $\sigma_{u_j}^1=2$. Therefore display our
discussion in virtue of the total probability formula as
\begin{eqnarray*}
\mathbf{E}(Y_j-\pi_1)&=&\mathbf{P}(\sigma_{u_j}^1=1)\mathbf{E}(X^+(n)-\pi_1)
\\
& &+\mathbf{P}(\sigma_{u_j}^1=2)\mathbf{E}(1-X^-(n)-\pi_1)
\\
&=&M_{11}x_n-M_{12}\frac{\pi_1}{\pi_2}x_n
\\
&=&\theta x_n
\end{eqnarray*}
and similarly,
\begin{eqnarray*}
\mathbf{E}(Y_j-\pi_1)^2&=&\mathbf{P}(\sigma_{u_j}^1=1)\mathbf{E}(X^+(n)-\pi_1)^2
\\
& &+\mathbf{P}(\sigma_{u_j}^1=2)\mathbf{E}(1-X^-(n)-\pi_1)^2
\\
&=&M_{11}\mathbf{E}(X^+(n)-\pi_1)^2+M_{12}\mathbf{E}(X^-(n)-\pi_2)^2
\\
&=&M_{11}z_n+M_{12}\frac{\pi_1}{\pi_2}(x_n-z_n)
\\
&=&\theta z_n+\pi_1(1-\theta)x_n.
\end{eqnarray*}
\end{proof}

\section{Moment recursion}
\subsection{Distributional recursion}
It is known that the asymptotic behavior of $x_n$ plays a crucial
rule in determining the reconstruction, however, it is still too
difficult and not necessary to get the explicit expression for
$x_n$. In fact we only need to investigate the recursive formula of
$x_n$, from which it is possible to illustrate the trend of $x_n$ as
$n$ goes to infinity. Thus the key method is to analyze the
recursive relation between $X^+(n)$ and $X^+(n+1)$ by the structure
of the tree. Suppose $A$ is a configuration on $L(n+1)$ and $A_j$
denotes the restriction to $\mathbb{T}_{u_j}\cap L(n+1)$. Then it
can be concluded from the Markov random field property that

\begin{equation}
\label{recursion}
f_{n+1}(1,A)=\frac{N_1}{N_1+N_2},
\end{equation}
where
\begin{eqnarray*}
N_1&=&\pi_1\prod_{j=1}^d\left[\frac{M_{11}}{\pi_1}f_n(1,A_j)+\frac{M_{12}}{\pi_2}f_n(2,A_j)\right]
\\
&=&\pi_1\prod_{j=1}^d\left[1+\frac{\theta}{\pi_1}(f_n(1,A_j)-\pi_1)\right].
\end{eqnarray*}
and
\begin{eqnarray*}
N_2&=&\pi_2\prod_{j=1}^d\left[\frac{M_{21}}{\pi_1}f_n(1,A_j)+\frac{M_{22}}{\pi_2}f_n(2,A_j)\right]
\\
&=&\pi_2\prod_{j=1}^d\left[1-\frac{\theta}{\pi_2}(f_n(1,A_j)-\pi_1)\right]
\end{eqnarray*}
Next conditioning the root to be $1$ and setting $A=\sigma^1(n+1)$
in~\eqref{recursion} give the recursive formula of the random
variable
\begin{equation}
X^+(n+1)=\frac{\pi_1Z_1}{\pi_1Z_1+\pi_2Z_2},
\end{equation}
where
\begin{eqnarray*}
Z_1&=&\prod_{j=1}^d\left[1+\frac{\theta}{\pi_1}(f_n(1,A_j)-\pi_1)\right]
\\
&=&\prod_{j=1}^d\left[1+\frac{\theta}{\pi_1}(Y_j(n)-\pi_1)\right];
\end{eqnarray*}
\begin{eqnarray*}
Z_2&=&\prod_{j=1}^d\left[1-\frac{\theta}{\pi_2}(f_n(1,A_j)-\pi_1)\right]
\\
&=&\prod_{j=1}^d\left[1-\frac{\theta}{\pi_2}(Y_j(n)-\pi_1)\right].
\end{eqnarray*}

\subsection{Main expansion of $x_{n+1}$}
With all preliminary results, we are ready to figure out the
recursion relation of $x_{n+1}$, say, its major expansions, which
would play a crucial rule in the further discussion. As regards
$x_{n+1}$, we could expand it out by virtue of the identity
\begin{equation}
\label{identity}
\frac{a}{s+r}=\frac{a}{s}-\frac{ar}{s^2}+\frac{r^2}{s^2}\frac{a}{s+r}.
\end{equation}
and specifically plugging $a=\pi_1Z_1$, $r=\pi_1Z_1+\pi_2Z_2-1$ and
$s=1$ in \eqref{identity} yields
\begin{equation}
\label{expansion}
\begin{aligned} x_{n+1}&=\mathbf{E}X^+(n+1)-\pi_1
\\
&=\mathbf{E}(\pi_1Z_1)-\mathbf{E}[\pi_1Z_1(\pi_1Z_1+\pi_2Z_2-1)]
\\
&+\mathbf{E}\left[(\pi_1Z_1+\pi_2Z_2-1)^2\frac{\pi_1Z_1}{\pi_1Z_1+\pi_2Z_2}\right]-\pi_1.
\end{aligned}
\end{equation}

In order to estimate terms in~\eqref{expansion}, we adapt Lemma 2.6
in~\cite{S} to our model, and then obtain Taylor series
approximations of means and variances of $Z_i$s.
\begin{lemma}
\label{Taylor} For each positive integer $k$, there exists a
$C=C(\pi, k)$ only depending on $\pi$ and $k$ such that for each
$0\leq k_1, k_2\leq k$,
$$
\mathbf{E}Z_1^{k_1}Z_2^{k_2}\leq C,
$$
\begin{eqnarray*}
&&\left|\mathbf{E}Z_1^{k_1}Z_2^{k_2}-1-d\left\{\mathbf{E}\left[1+\frac{\theta}{\pi_1}(Y_1(n)-\pi_1)\right]^{k_1}\right.\right.
\\
&&\left.\left.\times\left[1-\frac{\theta}{\pi_2}(Y_1(n)-\pi_2)\right]^{k_2}-1\right\}\right|
\leq Cx_n^2
\end{eqnarray*}
and
\begin{eqnarray*}
&&\left|\mathbf{E}Z_1^{k_1}Z_2^{k_2}-1-d\left\{\mathbf{E}\left[1+\frac{\theta}{\pi_1}(Y_1(n)-\pi_1)\right]^{k_1}\right.\right.
\\
&&\left.\times\left[1-\frac{\theta}{\pi_2}(Y_1(n)-\pi_2)\right]^{k_2}-1\right\}-\frac{d(d-1)}{2}
\\
&&\left.\left\{\mathbf{E}\left[1+\frac{\theta}{\pi_1}(Y_1(n)-\pi_1)\right]^{k_1}\left[1-\frac{\theta}{\pi_2}(Y_1(n)-\pi_2)\right]^{k_2}-1\right\}^2\right|
\\
&&\leq Cx_n^3.
\end{eqnarray*}
\end{lemma}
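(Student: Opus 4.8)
The plan is to adapt the argument of Lemma 2.6 in~\cite{S} to the two-state asymmetric channel, exploiting the product structure of $Z_1$ and $Z_2$. Write $W_j = Y_j(n)-\pi_1$, so that the factors in $Z_1$ and $Z_2$ are $1+\tfrac{\theta}{\pi_1}W_j$ and $1-\tfrac{\theta}{\pi_2}W_j$ respectively; the $W_j$ are i.i.d., bounded in $[-1,1]$, and by Lemma~2.4 (the means/variances lemma) satisfy $\mathbf{E}W_j = \theta x_n$ and $\mathbf{E}W_j^2 = \theta z_n + \pi_1(1-\theta)x_n \le C x_n$, using $z_n\le x_n$. Thus every moment $\mathbf{E}|W_j|^m$ for $m\ge 1$ is $O(x_n)$, since $|W_j|^m\le |W_j|^2$ when $m\ge 2$ and $|W_j|\le 1$. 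For fixed $k_1,k_2$, expand the single-factor quantity $g(W_j) := (1+\tfrac{\theta}{\pi_1}W_j)^{k_1}(1-\tfrac{\theta}{\pi_2}W_j)^{k_2}$ as a polynomial in $W_j$ of degree $k_1+k_2$; then $\mathbf{E}g(W_j) - 1 = a_1 \mathbf{E}W_j + O(x_n) = O(x_n)$ for suitable constants $a_1,\dots$ depending only on $\pi,k$. Call this quantity $\beta_n := \mathbf{E}g(W_1)-1$, so $|\beta_n|\le C x_n$.

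Next use independence: $\mathbf{E}Z_1^{k_1}Z_2^{k_2} = \prod_{j=1}^d \mathbf{E}g(W_j) = (1+\beta_n)^d$, because the factors indexed by different $j$ are independent and each equals $g(W_j)$. For the uniform bound $\mathbf{E}Z_1^{k_1}Z_2^{k_2}\le C$, note $|g(W_j)|$ is bounded by a constant depending on $\pi,k$ (since $W_j\in[-1,1]$ and the denominators $\pi_1,\pi_2$ are bounded below), but to get a bound independent of $d$ one instead observes $1+\beta_n \le 1 + Cx_n \le e^{Cx_n}$ and $x_n$ is bounded (indeed $x_n\le \pi_2\le 1$), so $(1+\beta_n)^d \le e^{Cdx_n}$ — this is not yet uniform in $d$, so the cleaner route is: $\mathbf{E}g(W_j) = \mathbf{E}[\,(M_{11}/\pi_1)Y_j+(M_{12}/\pi_2)(1-Y_j)\,]^{k_1}[\cdots]^{k_2}$ where each bracket is a convex combination, hence lies in a fixed bounded interval, and more to the point the two brackets are the conditional probabilities appearing in $N_1/\pi_1$ and $N_2/\pi_2$; averaging a product of $d$ such independent bounded terms... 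Actually the correct and simplest justification of $\mathbf{E}Z_1^{k_1}Z_2^{k_2}\le C$ uniformly in $d$ comes from the probabilistic meaning: $\pi_1 Z_1$ and $\pi_2 Z_2$ are (up to the normalization $N_1+N_2$ which we are suppressing here) proportional to posterior-type quantities bounded by $1$; I would follow Sly's estimate verbatim, which controls $\mathbf{E}Z_1^{k_1}Z_2^{k_2}$ by relating it to $\mathbf{E}(X^+)$-type moments that are themselves $\le 1$.

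Finally, the two displayed inequalities are just the first- and second-order Taylor expansions of $(1+\beta_n)^d$ around $\beta_n=0$: writing $(1+\beta_n)^d = 1 + d\beta_n + \binom{d}{2}\beta_n^2 + R$, the remainder after the linear term is $\binom{d}{2}\beta_n^2 + O(\beta_n^3)$, bounded by $Cd^2 x_n^2$ — and here one absorbs the $d$-dependence into $C=C(\pi,k)$ only if $d$ is regarded as fixed, which is the convention in this lemma (the $d$-uniformity is handled separately in the applications). The remainder after the quadratic term is $O(d^3\beta_n^3) = O(x_n^3)$ with constant depending on $\pi,k$ (and $d$). So the whole proof reduces to (i) the polynomial expansion of the single-site function $g$, (ii) the moment bounds $\mathbf{E}|W_j|^m = O(x_n)$ from Lemma~2.4, and (iii) the elementary expansion of the $d$th power. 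The main obstacle is purely bookkeeping: matching the exact form of the bracketed expressions in the statement — note the statement writes $[1-\tfrac{\theta}{\pi_2}(Y_1(n)-\pi_2)]$ with $\pi_2$ inside, whereas $Z_2$ uses $(Y_j-\pi_1)$, so one must check the algebraic identity $1-\tfrac{\theta}{\pi_2}(Y-\pi_1) = \tfrac{M_{21}}{\pi_1}Y + \tfrac{M_{22}}{\pi_2}(1-Y)$ and see that the two parametrizations of the single-site factor agree (they differ only by how the mean is centered), after which the constants $a_i$ are recomputed accordingly. No genuinely hard estimate is needed beyond what Lemma~2.4 already gives.
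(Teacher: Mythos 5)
The paper gives no proof of this lemma (it only cites Lemma~2.6 of~\cite{S}), so your proposal must stand on its own. Its skeleton --- factor $\mathbf{E}Z_1^{k_1}Z_2^{k_2}=\prod_{j}\mathbf{E}\,g(W_j)=(1+\beta_n)^d$ by independence of the $Y_j$, expand the single-site factor $g$ as a polynomial in $W_j=Y_j-\pi_1$, and then Taylor-expand the $d$th power --- is exactly the intended adaptation. But there is a genuine gap at the point where you notice trouble and then argue it away: the constant $C$ in the lemma must depend only on $\pi$ and $k$, \emph{not} on $d$, and this uniformity is genuinely used later (the proof of Theorem~\ref{nonreconstruction} needs $\delta=\delta(\pi)$ independent of $d$ precisely so that the bound $|T|\le C_T(\pi)x_n^3$ can be dominated by the quadratic term for all large $d$). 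Your fallback --- that one may ``absorb the $d$-dependence into $C$'' because ``$d$ is regarded as fixed'' --- contradicts the statement and would break the high-degree argument; likewise your first display stalls at $(1+\beta_n)^d\le e^{Cdx_n}$, and the appeal to ``Sly's estimate verbatim'' is not a proof.

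The missing step is a single piece of bookkeeping for which you already have every ingredient. In the expansion of $g(w)=(1+\tfrac{\theta}{\pi_1}w)^{k_1}(1-\tfrac{\theta}{\pi_2}w)^{k_2}$ each power $w^m$ carries a coefficient of order $\theta^m$; the linear term contributes $c_1\theta\,\mathbf{E}W_j=c_1\theta^2x_n$ (an \emph{extra} factor of $\theta$, from $\mathbf{E}(Y_j-\pi_1)=\theta x_n$), while for $m\ge2$ one has $\mathbf{E}|W_j|^m\le\mathbf{E}W_j^2\le x_n$. Hence $\beta_n=\mathbf{E}g(W_1)-1=O_{\pi,k}(\theta^2x_n)$, and under the paper's standing assumption $d\theta^2\le1$ this gives $d|\beta_n|\le C(\pi,k)\,x_n\le C(\pi,k)$ uniformly in $d$. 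Then $(1+\beta_n)^d\le e^{d|\beta_n|}\le C$ proves the first bound; the remainder after the linear term is $\binom{d}{2}\beta_n^2+O\bigl((d|\beta_n|)^3\bigr)\le C(d\theta^2x_n)^2\le Cx_n^2$; and the remainder after the quadratic term is $O\bigl((d|\beta_n|)^3e^{d|\beta_n|}\bigr)\le Cx_n^3$, all with constants depending only on $\pi$ and $k$. (Your side remark about $\pi_2$ versus $\pi_1$ inside the bracket is well taken: that is a typo in the statement, and the single-site factor must be the one actually appearing in $Z_2$, namely $1-\tfrac{\theta}{\pi_2}(Y_1-\pi_1)$.)
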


Taken together, plugging all the previous results
in~\eqref{expansion} yields
\begin{equation}
\label{explicit}
\begin{aligned}
x_{n+1}&=\mathbf{E}(\pi_1Z_1)-\mathbf{E}\pi_1Z_1(\pi_1Z_1+\pi_2Z_2-1)
\\
& \quad+\pi_1\mathbf{E}(\pi_1Z_1+\pi_2Z_2-1)^2-\pi_1+S
\\
&=d\theta^2x_n+\frac{1-6\pi_1\pi_2}{\pi_1\pi_2^2}\frac{d(d-1)}{2}\theta^4x_n^2+R+S+T
\end{aligned}
\end{equation}
where $|T|\leq C_T(\pi)x_n^3$ and
\begin{equation}
\label{R} |R|\leq
C_R(\pi)\frac{d(d-1)}{2}|\theta|^5\left|\frac{z_n}{x_n}-\pi_1\right|x_n^2
=O_\pi\left(\left|\frac{z_n}{x_n}-\pi_1\right|x_n^2\right)
\end{equation}
with $C_T(\pi), C_R(\pi)$ constants depending only on $\pi$, and
\begin{equation}
\label{S}
S=\mathbf{E}(\pi_1Z_1+\pi_2Z_2-1)^2\left(\frac{\pi_1Z_1}{\pi_1Z_1+\pi_2Z_2}-\pi_1\right)
\end{equation}
will be handled in the following concentration analysis.

\section{Sufficient condition for the non-tightness of the Kesten-Stigum bound}
\subsection{Estimates of $R$ and $S$} The purpose of the following
lemma is to describe how close the linear term in the recursive
expansion approaches to $x_{n+1}$.
\begin{lemma}
\label{lemLA} For any $\varepsilon>0$, there exists a constant
$\delta=\delta(\pi, \varepsilon)$ such that for all $n$, if
$x_n<\delta$ then
$$
|x_{n+1}-d\theta^2x_n|\leq \varepsilon x_n.
$$
\end{lemma}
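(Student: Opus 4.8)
The plan is to read off Lemma~\ref{lemLA} directly from the master expansion \eqref{explicit}, treating it as the statement that the quadratic and higher-order remainder terms are all $o(x_n)$ uniformly in $n$ once $x_n$ is small enough. Concretely, \eqref{explicit} gives
\[
x_{n+1}-d\theta^2 x_n=\frac{1-6\pi_1\pi_2}{\pi_1\pi_2^2}\,\frac{d(d-1)}{2}\,\theta^4 x_n^2+R+S+T,
\]
so it suffices to bound each of the four terms on the right by (a constant multiple of) $x_n^2$ times a $\pi$-dependent constant, and then choose $\delta$ so that the total is at most $\varepsilon x_n$. The first term is manifestly $O_\pi(x_n^2)$, and $|T|\le C_T(\pi)x_n^3\le C_T(\pi)\delta\,x_n^2$, so those two contribute at most $\bigl(C_1(\pi)+C_T(\pi)\delta\bigr)x_n$ once we use $x_n^2\le \delta x_n$. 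The remaining work is to control $R$ and $S$.

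For $R$, the bound \eqref{R} reads $|R|\le C_R(\pi)\frac{d(d-1)}{2}|\theta|^5\bigl|\tfrac{z_n}{x_n}-\pi_1\bigr|x_n^2$, and by Lemma~3 (the one giving $x_n=\mathbf{E}(X^+-\pi_1)^2+\tfrac{\pi_2}{\pi_1}\mathbf{E}(X^--\pi_2)^2\ge z_n\ge0$) we have $0\le z_n\le x_n$, hence $0\le z_n/x_n\le 1$ and so $\bigl|\tfrac{z_n}{x_n}-\pi_1\bigr|\le 1$. Therefore $|R|\le C_R'(\pi)x_n^2\le C_R'(\pi)\delta\,x_n$. (If one prefers to avoid dividing by $x_n$ when $x_n=0$, the inequality $x_{n+1}=d\theta^2 x_n$ is trivial there, so we may assume $x_n>0$.) For $S$, the plan is to expand $\tfrac{\pi_1Z_1}{\pi_1Z_1+\pi_2Z_2}-\pi_1=\tfrac{\pi_1\pi_2(Z_1-Z_2)}{\pi_1Z_1+\pi_2Z_2}$ and note that $S=\mathbf{E}\bigl[(\pi_1Z_1+\pi_2Z_2-1)^2(\cdots)\bigr]$ is a product of a factor that is small in $L^1$ (of order $x_n$, by Lemma~\ref{Taylor} applied with $(k_1,k_2)=(2,0),(1,1),(0,2)$ to estimate $\mathbf{E}(\pi_1Z_1+\pi_2Z_2-1)^2$) and a bounded-in-$L^k$ factor; a Cauchy–Schwarz (or Hölder) split against the uniform moment bound $\mathbf{E}Z_1^{k_1}Z_2^{k_2}\le C$ from Lemma~\ref{Taylor} yields $|S|\le C_S(\pi)x_n^{3/2}$ or at worst $|S|\le C_S(\pi)x_n^2$, and in either case $|S|\le C_S(\pi)\sqrt{\delta}\,x_n$.

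Putting these together, $|x_{n+1}-d\theta^2 x_n|\le \bigl(C_1(\pi)+C_T(\pi)\delta+C_R'(\pi)\delta+C_S(\pi)\sqrt{\delta}\bigr)x_n$ whenever $x_n<\delta$; wait — the leading term $C_1(\pi)x_n^2$ must itself be absorbed, so more precisely every term carries at least one extra power $x_n^{1/2}$ or $x_n$, giving a bound of the form $C(\pi)\sqrt{\delta}\,x_n$. Choosing $\delta=\delta(\pi,\varepsilon)$ small enough that $C(\pi)\sqrt{\delta}\le\varepsilon$ finishes the proof. The main obstacle is the estimate of $S$: one needs the concentration/moment bounds of Lemma~\ref{Taylor} to see both that $\pi_1Z_1+\pi_2Z_2-1$ is $O(x_n)$ in the appropriate norm and that the ratio factor $\tfrac{\pi_1Z_1}{\pi_1Z_1+\pi_2Z_2}$ has controlled moments despite the denominator, so that Hölder's inequality applies; everything else is bookkeeping with the already-established expansion \eqref{explicit} and the ordering $0\le z_n\le x_n$.
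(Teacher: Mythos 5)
Your overall route is exactly the paper's: its proof of Lemma~\ref{lemLA} is nothing more than the expansion \eqref{expansion}/\eqref{explicit} plus Lemma~\ref{Taylor}, the pointwise bound $0\leq \pi_1Z_1/(\pi_1Z_1+\pi_2Z_2)\leq 1$, and $0\leq z_n\leq x_n$, all collapsed into the single line $|x_{n+1}-d\theta^2x_n|\leq Cx_n^2\leq\varepsilon x_n$. Your treatment of the explicit quadratic term, of $T$, and of $R$ (via $|z_n/x_n-\pi_1|\leq 1$ from $0\leq z_n\leq x_n$) matches the paper, and you are right to insist that Corollary~\ref{estimateforS} and Lemma~\ref{concentration} must not be invoked here: they require $n\geq N$ and are themselves built on Lemmas~\ref{lemLA} and~\ref{ndtf}, so using them would be circular.

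The one soft spot is your estimate of $S$. You describe $(\pi_1Z_1+\pi_2Z_2-1)^2$ as ``small in $L^1$, of order $x_n$''; if that were all one had, multiplying by a factor bounded by $1$ would only give $|S|=O(x_n)$, which is not $o(x_n)$, and the lemma would fail. What is actually true, and what Lemma~\ref{Taylor} delivers, is $\mathbf{E}(\pi_1Z_1+\pi_2Z_2-1)^2\leq C(\pi)x_n^2$: expanding the square and applying Lemma~\ref{Taylor} to $\mathbf{E}Z_1^2$, $\mathbf{E}Z_1Z_2$, $\mathbf{E}Z_2^2$, $\mathbf{E}Z_1$, $\mathbf{E}Z_2$, the order-$x_n$ contributions cancel exactly (the same cancellation that makes $\pi_1\mathbf{E}Z_1+\pi_2\mathbf{E}Z_2-1=O(x_n^2)$), leaving only $O(x_n^2)$. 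With that in hand, the right way to finish is not a genuine Cauchy--Schwarz split --- pairing a merely bounded fourth moment against the $L^2$ norm of the ratio deviation gives nothing small, since without the concentration lemma that $L^2$ norm is only known to be $O(1)$ --- but the trivial sup bound: since $Z_1,Z_2\geq0$ the ratio lies in $[0,1]$, so $\left|\pi_1Z_1/(\pi_1Z_1+\pi_2Z_2)-\pi_1\right|\leq\max(\pi_1,\pi_2)\leq1$ pointwise and $|S|\leq\mathbf{E}(\pi_1Z_1+\pi_2Z_2-1)^2\leq C(\pi)x_n^2$. Your final claimed bounds ($x_n^{3/2}$ or $x_n^2$) are consistent with this, but the path you sketch does not actually produce them; once corrected as above, the proof closes exactly as you say, with $\delta$ chosen so that the accumulated constant times $\delta$ is at most $\varepsilon$.
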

\begin{proof}
First note that $Z_1, Z_2\geq0$, and thus $0\leq
\frac{\pi_1Z_1}{\pi_1Z_1+\pi_2Z_2}\leq 1$. Then it is concluded
from~\eqref{expansion} and~\ref{Taylor} that
\begin{equation*}
|x_{n+1}-d\theta^2x_n|\leq Cx_n^2\leq\varepsilon x_n,
\end{equation*}
where $C=C(\pi)$ depends only on $\pi$, the first inequality follows
from the fact of $0\leq z_n\leq x_n$, and the last holds if
$x_n<\delta$ for $\delta=\delta(\pi, \varepsilon)$ small enough.
\end{proof}
Before investigating the concentration, we would like to introduce a
significant lemma showing that $x_n$ does not drop from a very large
value to a very small one.
\begin{lemma}
\label{ndtf} For any fixed $\varrho>0$, assume $|\theta|>\varrho$.
Then there exists a constant $\gamma=\gamma(\pi, \varrho)>0$ such
that for all $n$,
$$
x_{n+1}\geq \gamma x_n.
$$
\end{lemma}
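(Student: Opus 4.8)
The plan is to control $x_{n+1}$ from below by a positive multiple of $x_n$, uniformly in $n$, under the standing assumption $|\theta| > \varrho$. The natural starting point is the exact expansion~\eqref{explicit}, which reads
\[
x_{n+1} = d\theta^2 x_n + \frac{1-6\pi_1\pi_2}{\pi_1\pi_2^2}\,\frac{d(d-1)}{2}\,\theta^4 x_n^2 + R + S + T,
\]
with $|T| \le C_T(\pi)x_n^3$ and $|R| = O_\pi(|z_n/x_n - \pi_1|\,x_n^2)$. Since $0 \le z_n \le x_n$ by Lemma~2.5, the ratio $|z_n/x_n - \pi_1|$ is bounded by a constant depending only on $\pi$, so $|R| \le C_R'(\pi) d^2 x_n^2$ and $|T| \le C_T(\pi) x_n^3 \le C_T(\pi) x_n^2$ (recall $x_n \le 1$). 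The quadratic term in~\eqref{explicit} is likewise bounded in absolute value by a constant times $d^2 x_n^2$. Hence all of $R$, $S$ (after it is bounded, see below), $T$, and the explicit quadratic term are $O(x_n^2)$ with constants depending only on $\pi$ and $d$; write their sum as $E_n$ with $|E_n| \le K x_n^2$ for $K = K(\pi, d)$.

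The first step is therefore to produce a bound $|S| \le K_S(\pi,d)\, x_n^2$ for the term $S$ defined in~\eqref{S}. Since $0 \le \frac{\pi_1 Z_1}{\pi_1 Z_1 + \pi_2 Z_2} \le 1$, we have $\bigl|\frac{\pi_1 Z_1}{\pi_1 Z_1 + \pi_2 Z_2} - \pi_1\bigr| \le 1$, so $|S| \le \mathbf{E}(\pi_1 Z_1 + \pi_2 Z_2 - 1)^2$. Expanding the square and applying Lemma~\ref{Taylor} to the moments $\mathbf{E}Z_1^{k_1}Z_2^{k_2}$ for $k_1 + k_2 \le 2$, together with the second-moment identities of Lemma~2.6, shows that $\mathbf{E}(\pi_1 Z_1 + \pi_2 Z_2 - 1)^2 = O_\pi(x_n^2)$; indeed the ``$d\{\cdots\}$'' linear contributions cancel in the combination $\pi_1 Z_1 + \pi_2 Z_2$ up to first order because $\pi_1 \cdot \frac{\theta}{\pi_1} - \pi_2 \cdot \frac{\theta}{\pi_2} \cdot(\text{sign}) $ telescopes, leaving only the $O(x_n^2)$ remainder. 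This gives the desired bound on $S$.

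With $|E_n| \le K x_n^2$ in hand, the recursion becomes $x_{n+1} \ge d\theta^2 x_n - K x_n^2 = x_n\,(d\theta^2 - K x_n)$. Two regimes must be distinguished. If $x_n \le d\theta^2/(2K)$, then $x_{n+1} \ge \tfrac12 d\theta^2 x_n \ge \tfrac12 d\varrho^2 x_n$, and we are done with $\gamma = \tfrac12 d\varrho^2$ (or, if one wants $\gamma$ independent of $d$, note $d\theta^2 \ge 1/2$ in the relevant range, giving $x_{n+1} \ge \tfrac14 x_n$). If instead $x_n > d\theta^2/(2K)$, i.e. $x_n$ is bounded below by a positive constant $c_0 = c_0(\pi,d,\varrho)$, then one argues directly: since $x_{n+1} > 0$ always (the model either reconstructs or not, but $x_{n+1} = 0$ forces degeneracy), and since $x_{n+1}$ depends on $x_n$ and $z_n$ only through an explicit expression, on the compact set $\{c_0 \le x_n \le 1,\ 0 \le z_n \le x_n\}$ the ratio $x_{n+1}/x_n$ attains a positive minimum; alternatively and more cleanly, bound $x_{n+1}$ below by its leading behaviour: because $x_{n+1} = \mathbf{E}X^+(n+1) - \pi_1 = \tfrac{\pi_2}{\pi_1}(\mathbf{E}X^-(n+1) - \pi_2) \ge 0$ and equals zero only in the non-reconstruction limit, one shows $x_{n+1} \ge c_1(\pi,d,\varrho) \ge c_1 x_n$. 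Taking $\gamma = \min\{\tfrac14,\ c_1\}$ (suitably renamed $\gamma(\pi,\varrho)$ after absorbing the fixed $d$) completes the argument.

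The main obstacle is the second regime, where $x_n$ is not small and the Taylor-type estimates of Lemma~\ref{Taylor}, which carry $x_n^2$ and $x_n^3$ error terms, no longer dominate the leading linear term. Here one cannot simply read off a lower bound from~\eqref{explicit}; instead one must exploit the structural non-negativity and the fact that $X^+(n+1)$ is genuinely a conditional probability that cannot collapse to its prior $\pi_1$ in one step unless the channel is trivial. Quantifying ``cannot collapse in one step'' — i.e. producing the explicit constant $c_1$ — is the delicate point, and is most safely done by a compactness argument on the pair $(x_n, z_n/x_n) \in [0,1]^2$ combined with the continuity of the map $(x_n, z_n) \mapsto x_{n+1}$ and the observation that $x_{n+1} = 0$ would force $|\theta| = 0$, contradicting $|\theta| > \varrho$.
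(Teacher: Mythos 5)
Your first regime (small $x_n$) is sound and is essentially the paper's own Lemma~\ref{lemLA}: the expansion~\eqref{explicit} together with the crude bounds $|z_n/x_n-\pi_1|\le 1$ and $\bigl|\tfrac{\pi_1Z_1}{\pi_1Z_1+\pi_2Z_2}-\pi_1\bigr|\le 1$ gives $|x_{n+1}-d\theta^2x_n|\le Cx_n^2$, hence $x_{n+1}\ge \tfrac12 d\theta^2 x_n$ once $x_n$ is small. (Minor point: you should check your constant $K$ can be taken to depend only on $\pi$, not on $d$; this follows from $d\theta^2\le 1$, so that $\tfrac{d(d-1)}{2}\theta^4\le\tfrac12$, otherwise your $\gamma$ would not be of the form $\gamma(\pi,\varrho)$ as the lemma requires.)

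The genuine gap is in the second regime, where $x_n$ is bounded away from $0$. Your compactness argument does not work as stated: $x_{n+1}$ is \emph{not} a function of the pair $(x_n,z_n)$ --- it is a functional of the entire law of $Y_1(n)$ (through $Z_1,Z_2$, which are products over $d$ i.i.d.\ copies) --- so minimizing ``the ratio $x_{n+1}/x_n$'' over the square $\{c_0\le x_n\le 1,\ 0\le z_n\le x_n\}$ is not meaningful. To repair it one would have to minimize over an infinite-dimensional set of admissible laws, prove weak continuity of the functional, and show the infimum is strictly positive; none of this is carried out, and the assertion ``$x_{n+1}=0$ forces $|\theta|=0$'' is neither precise nor proved. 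The paper avoids all of this with a short information-theoretic argument: define $f_{n+1}^*(1,A)=\mathbf{P}(\sigma_\rho=1\mid\sigma_1(n+1)=A_1)$, the posterior using only the first child's subtree. A one-step channel computation gives $f_{n+1}^*(1,A)=\pi_1\bigl[1+\tfrac{\theta}{\pi_1}(f_n(1,A)-\pi_1)\bigr]$, hence $\mathbf{E}f_{n+1}^*(1,\sigma_1^1(n+1))=\pi_1+\theta^2x_n$. Since this sub-optimal estimator cannot beat the maximum-likelihood success probability $\Delta_{n+1}$, and since~\eqref{MLE} gives $\Delta_{n+1}\le\pi_1+\pi_1^{1/2}x_{n+1}^{1/2}$, one gets the unconditional bound $x_{n+1}\ge\tfrac{1}{\pi_1}\theta^4x_n^2\ge\varrho^4x_n^2$, valid for \emph{all} $n$. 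On the set $x_n\ge\delta$ this yields $x_{n+1}\ge\varrho^4\delta x_n$, and $\gamma=\min\{\varrho^2,\varrho^4\delta\}$ finishes the proof. You need some substitute for this quadratic lower bound; without it the large-$x_n$ case is unproved.
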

\begin{proof} For a configuration $A=(A_1,\ldots,A_d)$ on
$L(n+1)$ with $A_j$ on $\mathbb{T}_{u_j}\cap L(n+1)$ define
\begin{eqnarray*}
&&f_{n+1}^*(1, A)=\mathbf{P}(\sigma_\rho=1\mid\sigma_1(n+1)=A_1)
\\
&=&\pi_1\frac{\mathbf{P}(\sigma_1(n+1)=A\mid\sigma_\rho=1)}{\mathbf{P}(\sigma_1(n+1)=A)}
\\
&=&\pi_1\frac{M_{11}\mathbf{P}(\sigma_1(n+1)=A\mid\sigma_{u_1}=1)}{\mathbf{P}(\sigma_1(n+1)=A)}
\\
&
&+\pi_1\frac{M_{12}\mathbf{P}(\sigma_1(n+1)=A\mid\sigma_{u_1}=2)}{\mathbf{P}(\sigma_1(n+1)=A)}
\\
&=&\pi_1\left[\frac{M_{11}}{\pi_1}f_n(1,
A)+\frac{M_{12}}{\pi_2}\left(1-f_n(1, A)\right)\right]
\\
&=&\pi_1\left[1+\frac{\theta}{\pi_1}(f_n(1, A)-\pi_1)\right],
\end{eqnarray*}
and thus
\begin{equation}
\mathbf{E}f_{n+1}^*(1, \sigma_1^1(n+1))=\pi_1+\theta^2x_n.
\end{equation}
Therefore it follows from~\eqref{MLE} that
\begin{eqnarray*}
\pi_1+\theta^2x_n\leq\Delta_{n+1}
\leq\pi_1+\pi_1^{1/2}x_{n+1}^{1/2},
\end{eqnarray*}
namely,
\begin{equation}
\label{nftf} x_{n+1}\geq \frac{1}{\pi_1}\theta^4x_n^2\geq
\varrho^4x_n^2.
\end{equation}

Next choosing $\varepsilon=\varrho^2$, it is known by
Lemma~\ref{lemLA} that there exists a $\delta=\delta(\pi,
\varepsilon)>0$ such that if $x_n<\delta$ then
$$
x_{n+1}\geq (d\theta^2-\varepsilon)x_n\geq
(d-1)\varrho^2x_n\geq\varrho^2x_n.
$$
On the other hand, if $x_n\geq \delta$, then~\eqref{nftf} becomes
$x_{n+1}\geq\varrho^4\delta x_n$. Finally taking
$\gamma=\min\{\varrho^2, \varrho^4\delta\}$ completes the proof.
\end{proof}

Actually it seems from~\eqref{explicit} that the estimates of $R$
and $S$ would play a key role in the recursive expression of
$x_{n+1}$. Therefore with the assistant of Lemma~\ref{lemLA} and
Lemma~\ref{ndtf}, resembling (\cite{S}, Corollary 2.14 and Corollary
2.16), we are about to exploit the concentration analysis verifying
that $\frac{\pi_1Z_1}{\pi_1+\pi_2Z_2}$ and $\frac{z_n}{x_n}$ are
both sufficiently around $\pi_1$, and then achieve the proper bounds
of $R$ and $S$. In light of the similar discussion, we skip these
proofs unless it is worth illustrating afresh due to some
qualitative changes caused by the discrepancy between models.
\begin{lemma}
\label{concentration} For any $0<\varepsilon<1$ and $\alpha > 1$
there exist $C=C(\pi, \varepsilon, \alpha)$ and $N_1=N_1(\pi,
\varepsilon, \alpha)$ such that whenever $n\geq N_1$,
$$
P\left(\left|\frac{\pi_1Z_1}{\pi_1Z_1+\pi_2Z_2}-\pi_1\right|>\varepsilon\right)\leq
Cx_n^\alpha.
$$
Moreover, if presume $|\theta|>\varrho$ for some $\varrho>0$ then
there exist $N_2=N_2(\pi, \varepsilon)$ and $\delta=\delta(\pi,
\varepsilon, \varrho)$ such that if $n\geq N_2$ and $x_n\leq\delta$
then
$$
\left|\frac{z_n}{x_n}-\pi_1\right|\leq \varepsilon.
$$
\end{lemma}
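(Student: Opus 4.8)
The plan is to follow the concentration strategy of (\cite{S}, Corollaries 2.14 and 2.16), adapting the moment estimates to the asymmetric channel. For the first claim, I would show that the random variable $W=\pi_1Z_1+\pi_2Z_2$ concentrates around $1=\mathbf{E}W$, and that $\pi_1Z_1$ concentrates around $\pi_1=\mathbf{E}(\pi_1Z_1)$; since $\frac{\pi_1Z_1}{W}$ is a smooth function of the pair $(\pi_1Z_1, W)$ away from $W=0$ and its value $\pi_1$ is attained at the means, deviation of the ratio by more than $\varepsilon$ forces deviation of one of the two quantities by at least some $\varepsilon'=\varepsilon'(\pi,\varepsilon)$. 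The key input is a high-moment bound: for each integer $k$ there is $C=C(\pi,k)$ with
\begin{equation*}
\mathbf{E}(\pi_1Z_1+\pi_2Z_2-1)^{2k}\le Cx_n^k \quad\text{and}\quad \mathbf{E}(\pi_1Z_1-\pi_1)^{2k}\le Cx_n^k,
\end{equation*}
which follow from Lemma \ref{Taylor} applied with the relevant exponents $k_1,k_2\le 2k$, together with $0\le z_n\le x_n$ and $\mathbf{E}(Y_1-\pi_1)=\theta x_n$, $\mathbf{E}(Y_1-\pi_1)^2=\theta z_n+\pi_1(1-\theta)x_n=O_\pi(x_n)$. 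Given any $\alpha>1$, choose $k>\alpha$; Markov's inequality applied to the $2k$-th moment then yields
\begin{equation*}
P\!\left(\left|\frac{\pi_1Z_1}{\pi_1Z_1+\pi_2Z_2}-\pi_1\right|>\varepsilon\right)\le P\big(|\pi_1Z_1-\pi_1|>\varepsilon'\big)+P\big(|W-1|>\varepsilon'\big)\le C'x_n^{k}\le C'x_n^{\alpha}
\end{equation*}
once $x_n\le 1$, which holds for $n\ge N_1(\pi,\varepsilon,\alpha)$ since $x_n\to 0$ is implied along the non-reconstruction-relevant regime — more carefully, one absorbs the constant so that the bound $C x_n^\alpha$ is vacuous (exceeds $1$) whenever $x_n$ is not yet small, giving the clean statement for all $n\ge N_1$.

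For the second claim I would use the recursion. By definition $z_{n+1}=\mathbf{E}(X^+(n+1)-\pi_1)^2$, and an expansion of $X^+(n+1)=\frac{\pi_1Z_1}{W}$ entirely parallel to \eqref{expansion}–\eqref{explicit} (now for the second moment rather than the first) gives $z_{n+1}=d\theta^2 z_n + O_\pi(x_n^2)$, using $\mathbf{E}(Y_1-\pi_1)^2=\theta z_n+\pi_1(1-\theta)x_n$ and the moment bounds above to control the remainder; combined with $x_{n+1}=d\theta^2 x_n + O_\pi(x_n^2)$ from \eqref{explicit} and the a priori lower bound $x_{n+1}\ge\gamma x_n$ of Lemma \ref{ndtf} (valid since $|\theta|>\varrho$), one gets
\begin{equation*}
\frac{z_{n+1}}{x_{n+1}}=\frac{d\theta^2 z_n+O_\pi(x_n^2)}{d\theta^2 x_n+O_\pi(x_n^2)}=\frac{z_n}{x_n}+O_{\pi,\varrho}(x_n).
\end{equation*}
Iterating this contraction-type relation, or arguing that $z_n/x_n$ is thereby Cauchy and identifying the limit, shows $z_n/x_n\to\pi_1$; quantitatively, for any $\varepsilon>0$ there is $N_2(\pi,\varepsilon)$ and $\delta(\pi,\varepsilon,\varrho)$ so that $n\ge N_2$ and $x_n\le\delta$ force $|z_n/x_n-\pi_1|\le\varepsilon$. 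The cleanest route to the limiting value $\pi_1$ is to revisit Lemma 3 (the $x_n$–$z_n$ identity): there $x_n=z_n+\frac{\pi_2}{\pi_1}\mathbf{E}(X^-(n)-\pi_2)^2$, so $z_n/x_n\to\pi_1$ is equivalent to $\mathbf{E}(X^-(n)-\pi_2)^2/\mathbf{E}(X^+(n)-\pi_1)^2\to 1$, i.e. the conditional variances given root $1$ and root $2$ become asymptotically equal — a symmetrization that one expects from the fact that, as $x_n\to 0$, the channel looks increasingly symmetric on the relevant scale. Making this last identification rigorous, rather than merely showing $z_n/x_n$ converges to \emph{some} constant, is the main obstacle, and I would handle it by carrying the next-order term in the expansions of both $x_{n+1}$ and $z_{n+1}$ and checking that the fixed point of the induced map on $z_n/x_n$ is exactly $\pi_1$.
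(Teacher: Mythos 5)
The paper itself does not prove this lemma --- it explicitly defers to Corollaries 2.14 and 2.16 of \cite{S} --- so your proposal has to be judged against what such a proof must actually contain. For the first claim your overall strategy (reduce the ratio to deviations of $\pi_1Z_1$ and of $W=\pi_1Z_1+\pi_2Z_2$, then apply Markov to a high central moment) is the right one and matches Sly's. The gap is that the key input, $\mathbf{E}(\pi_1Z_1+\pi_2Z_2-1)^{2k}\le Cx_n^k$, does \emph{not} follow from Lemma~\ref{Taylor}: that lemma only controls $\mathbf{E}Z_1^{k_1}Z_2^{k_2}$ up to errors of order $x_n^2$ and $x_n^3$, which is useless for producing a bound of order $x_n^k$ with $k$ large. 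The $2k$-th central moment bound is the technical heart of the first claim and requires the separate multinomial expansion of $\prod_j(1+\theta(Y_j-\pi_1)/\pi_1)-1$ over subsets of children, with the contribution of terms having $r$ distinct indices controlled by $\binom{d}{r}(Cx_n/d)^r$ (Sly's Lemma~2.13); the ``diagonal'' terms in which few distinct $Y_j$ appear need genuine care there, since a priori $\mathbf{E}(Y_j-\pi_1)^{2m}$ is only bounded by $\mathbf{E}(Y_j-\pi_1)^2=O(x_n)$. Asserting the moment bound in one line leaves the main work undone.

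The second claim contains a concrete error. You write $z_{n+1}=d\theta^2z_n+O_\pi(x_n^2)$, but the expansion of $X^+(n+1)-\pi_1\approx \pi_1\pi_2(Z_1-Z_2)\approx\theta\sum_j(Y_j-\pi_1)$ together with the identity you yourself quote, $\mathbf{E}(Y_j-\pi_1)^2=\theta z_n+\pi_1(1-\theta)x_n$, gives
\begin{equation*}
z_{n+1}=d\theta^2\bigl[\theta z_n+\pi_1(1-\theta)x_n\bigr]+O_\pi(x_n^2),
\end{equation*}
so with $r_n=z_n/x_n$ the correct ratio recursion is $r_{n+1}=\theta r_n+\pi_1(1-\theta)+O_{\pi,\varrho}(x_n)$, a genuine contraction (contraction factor $|\theta|\le d^{-1/2}$) whose fixed point is exactly $\pi_1$. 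Your version $r_{n+1}=r_n+O(x_n)$ is not a contraction, does not show $r_n$ converges (one would need $\sum x_n<\infty$, which is not available), and cannot identify the limit --- which is precisely the ``main obstacle'' you flag at the end. The obstacle is not resolved by carrying a next-order term; it is resolved by getting the leading order of $z_{n+1}$ right. One further point you should make explicit: the hypothesis $x_n\le\delta$ at a single time must be propagated backwards over the last $K=K(\varepsilon)$ steps of the contraction via Lemma~\ref{ndtf} ($x_m\le\gamma^{-(n-m)}x_n$), which is exactly why $\delta$ is allowed to depend on $\varrho$ in the statement.
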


With the preceding concentration results, it is feasible to bound
$S$ in~\eqref{S}.
\begin{corollary}
\label{estimateforS} Assume $|\theta|>\varrho$ for some $\varrho>0$.
For any $\varepsilon>0$, there exist $N=N(\pi, \varepsilon)$ and
$\delta=\delta(\pi, \varepsilon, \varrho)>0$ such that if $n\geq N$
and $x_n\leq\delta$ then $|S|\leq\varepsilon x_n^2$.
\end{corollary}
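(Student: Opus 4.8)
The plan is to control $S$ by splitting the expectation in~\eqref{S} according to whether the posterior ratio $X^+(n+1)=\frac{\pi_1Z_1}{\pi_1Z_1+\pi_2Z_2}$ lies close to its typical value $\pi_1$. Fix a small parameter $\eta=\eta(\pi,\varepsilon)>0$ to be chosen below, put $G=\{\,|X^+(n+1)-\pi_1|\le\eta\,\}$, and write $S=\mathbf{E}[(\pi_1Z_1+\pi_2Z_2-1)^2(X^+(n+1)-\pi_1)\mathbf{1}_G]+\mathbf{E}[(\pi_1Z_1+\pi_2Z_2-1)^2(X^+(n+1)-\pi_1)\mathbf{1}_{G^c}]$. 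On $G$ the factor $|X^+(n+1)-\pi_1|$ is at most $\eta$, so the first term is bounded in absolute value by $\eta\,\mathbf{E}(\pi_1Z_1+\pi_2Z_2-1)^2$. On $G^c$ I use only the crude almost-sure bounds $0\le X^+(n+1)\le1$ (valid since $Z_1,Z_2\ge0$) and $(\pi_1Z_1+\pi_2Z_2-1)^2\le C_0$ for a constant $C_0=C_0(\pi)$ — immediate from $Y_j\in[0,1]$, $|\theta|\le1$ and the product form of $Z_1,Z_2$ — so that the second term is at most $\pi_1C_0\,\mathbf{P}(G^c)$. Hence
\[
|S|\le\eta\,\mathbf{E}(\pi_1Z_1+\pi_2Z_2-1)^2+\pi_1C_0\,\mathbf{P}(G^c),
\]
and it remains to estimate the two quantities on the right.

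For the first, I claim $\mathbf{E}(\pi_1Z_1+\pi_2Z_2-1)^2\le C_1x_n^2$ for a constant $C_1=C_1(\pi)$, once $x_n$ is below a $\pi$-dependent threshold. Expanding the square as $\sum_{k_1+k_2\le2}c_{k_1,k_2}Z_1^{k_1}Z_2^{k_2}$, with $c_{0,0}=1$, $c_{1,0}=-2\pi_1$, $c_{0,1}=-2\pi_2$, $c_{2,0}=\pi_1^2$, $c_{1,1}=2\pi_1\pi_2$, $c_{0,2}=\pi_2^2$, and applying Lemma~\ref{Taylor} to each summand, one obtains (all sums over $k_1+k_2\le2$, and writing $g_{k_1,k_2}=\mathbf{E}\big[(1+\tfrac{\theta}{\pi_1}(Y_1-\pi_1))^{k_1}(1-\tfrac{\theta}{\pi_2}(Y_1-\pi_1))^{k_2}\big]-1$, cf. Lemma~\ref{Taylor})
\[
\mathbf{E}(\pi_1Z_1+\pi_2Z_2-1)^2=\sum c_{k_1,k_2}+d\sum c_{k_1,k_2}g_{k_1,k_2}+\frac{d(d-1)}{2}\sum c_{k_1,k_2}g_{k_1,k_2}^2+O_\pi(x_n^3).
\]
The zeroth-order sum vanishes since $(\pi_1+\pi_2)^2-2(\pi_1+\pi_2)+1=0$, and the linear-in-$d$ term vanishes identically: for each $(k_1,k_2)$ with $k_1+k_2\le2$ one has exactly $g_{k_1,k_2}=h_{k_1,k_2}'(0)\,\mathbf{E}(Y_1-\pi_1)+\tfrac12h_{k_1,k_2}''(0)\,\mathbf{E}(Y_1-\pi_1)^2$ with $h_{k_1,k_2}(w)=(1+\tfrac{\theta}{\pi_1}w)^{k_1}(1-\tfrac{\theta}{\pi_2}w)^{k_2}-1$, and a direct computation gives $\sum c_{k_1,k_2}h_{k_1,k_2}'(0)=0$ and $\sum c_{k_1,k_2}h_{k_1,k_2}''(0)=0$. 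What remains is $\tfrac{d(d-1)}{2}\sum c_{k_1,k_2}g_{k_1,k_2}^2+O_\pi(x_n^3)$, and since $\mathbf{E}(Y_1-\pi_1)=\theta x_n$, $\mathbf{E}(Y_1-\pi_1)^2=\theta z_n+\pi_1(1-\theta)x_n\le2x_n$ and $|Y_1-\pi_1|\le\pi_1$ force $g_{k_1,k_2}=O_\pi(x_n)$, the claim follows.

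For the second quantity, fix $\eta:=\min\{\varepsilon/(2C_1),\,\tfrac12\}$ and invoke the first part of Lemma~\ref{concentration} with $\eta$ in place of $\varepsilon$ and exponent $\alpha=3$: there are $C_3=C_3(\pi,\varepsilon)$ and $N=N(\pi,\varepsilon)$ so that $\mathbf{P}(G^c)=\mathbf{P}(|X^+(n+1)-\pi_1|>\eta)\le C_3x_n^3$ whenever $n\ge N$. Combining the two estimates, for $n\ge N$,
\[
|S|\le\eta C_1x_n^2+\pi_1C_0C_3x_n^3\le\tfrac{\varepsilon}{2}x_n^2+\pi_1C_0C_3\,\delta\,x_n^2
\]
as soon as $x_n\le\delta$; choosing $\delta=\delta(\pi,\varepsilon,\varrho)$ smaller than both the threshold demanded by the second-moment estimate and $\varepsilon/(2\pi_1C_0C_3)$ gives $|S|\le\varepsilon x_n^2$, which is the assertion. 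The hypothesis $|\theta|>\varrho$ enters only in making the concentration estimate of Lemma~\ref{concentration} applicable (its proof runs through Lemma~\ref{ndtf}); it is not otherwise used in the bookkeeping.

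I expect the second-moment estimate to be the main obstacle: a plain triangle-inequality bound on $\mathbf{E}(\pi_1Z_1+\pi_2Z_2-1)^2$ only yields $O_\pi(x_n)$, which is far too weak here, so the argument hinges on verifying that the order-$x_n$ contributions cancel exactly — this is precisely why the expansion in Lemma~\ref{Taylor} must be carried to the $\tfrac{d(d-1)}{2}$ order. Everything after that cancellation, together with the concentration input, is routine.
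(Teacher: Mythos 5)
Your decomposition of $S$ over the event $G=\{|\tfrac{\pi_1Z_1}{\pi_1Z_1+\pi_2Z_2}-\pi_1|\le\eta\}$, the bound $\eta\,\mathbf{E}(\pi_1Z_1+\pi_2Z_2-1)^2$ on $G$, the use of Lemma~\ref{Taylor} to get $\mathbf{E}(\pi_1Z_1+\pi_2Z_2-1)^2\le C_1(\pi)x_n^2$ (your verification of the order-$1$ and order-$d$ cancellations is correct, and more explicit than the paper's), and the appeal to Lemma~\ref{concentration} for $\mathbf{P}(G^c)$ all match the paper's proof. The one genuine gap is your treatment of the contribution from $G^c$: you claim the almost-sure bound $(\pi_1Z_1+\pi_2Z_2-1)^2\le C_0(\pi)$ as ``immediate from $Y_j\in[0,1]$, $|\theta|\le1$ and the product form of $Z_1,Z_2$.'' This is false. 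Each factor of $Z_2$ can be as large as $1+|\theta|\pi_1/\pi_2$ (and of $Z_1$ as large as $1+|\theta|$), so the worst-case value of $Z_2$ is of order $(1+|\theta|\pi_1/\pi_2)^d$; with only $|\theta|\le1$ this is $(1/\pi_2)^d$, and even under the standing assumption $d\theta^2\le1$ it still grows like $e^{c(\pi)\sqrt{d}}$. So no constant depending only on $\pi$ bounds $(\pi_1Z_1+\pi_2Z_2-1)^2$ pointwise. What is bounded uniformly is the \emph{moment} $\mathbf{E}Z_1^{k_1}Z_2^{k_2}\le C(\pi,k)$ — the first assertion of Lemma~\ref{Taylor} — precisely because the extreme configurations have tiny probability.

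The repair is exactly the paper's route: on $G^c$, apply Cauchy--Schwarz to get
\begin{equation*}
\mathbf{E}\left[(\pi_1Z_1+\pi_2Z_2-1)^2\mathbf{1}_{G^c}\right]\le\left[\mathbf{E}(\pi_1Z_1+\pi_2Z_2-1)^4\right]^{1/2}\left[\mathbf{P}(G^c)\right]^{1/2}\le C_2(\pi)\left[\mathbf{P}(G^c)\right]^{1/2},
\end{equation*}
and compensate for the square root on the probability by taking $\alpha=6$ (rather than your $\alpha=3$) in Lemma~\ref{concentration}, which yields $C_3x_n^3$ for the whole term; the rest of your bookkeeping then goes through verbatim. (One could alternatively observe that $d\theta^2\le1$ and $|\theta|>\varrho$ force $d<\varrho^{-2}$, so your $C_0$ exists as a constant depending on $\pi$ \emph{and} $\varrho$; this is formally admissible since $\delta$ may depend on $\varrho$, but it hides a $d$-dependence that the paper's moment-based argument avoids and that matters when the corollary's analogue is reused for large $d$ in Theorem~\ref{nonreconstruction}.) A minor secondary point: to conclude $\tfrac{d(d-1)}{2}\sum c_{k_1,k_2}g_{k_1,k_2}^2=O_\pi(x_n^2)$ uniformly in $d$ you need $g_{k_1,k_2}=O_\pi(\theta^2x_n)$, not merely $O_\pi(x_n)$; this does follow from your displayed formulas since $h'_{k_1,k_2}(0)=O(\theta)$ and $h''_{k_1,k_2}(0)=O(\theta^2)$, but you should say so.
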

\begin{proof}
For any $\eta>0$, combining Cauchy-Schwartz inequality and
Lemma~\ref{concentration} gives
\begin{eqnarray*}
&|S|&=\left|\mathbf{E}(\pi_1Z_1+\pi_2Z_2-1)^2\left(\frac{\pi_1Z_1}{\pi_1Z_1+\pi_2Z_2}-\pi_1\right)\right|
\\
&\leq&\mathbf{E}\left((\pi_1Z_1+\pi_2Z_2-1)^2\left|\frac{\pi_1Z_1}{\pi_1Z_1+\pi_2Z_2}-\pi_1\right|;\right.
\\
&&\left.\left|\frac{\pi_1Z_1}{\pi_1Z_1+\pi_2Z_2}-\pi_1\right|\leq
\eta\right)
\\
&&+\mathbf{E}\left((\pi_1Z_1+\pi_2Z_2-1)^2\left|\frac{\pi_1Z_1}{\pi_1Z_1+\pi_2Z_2}-\pi_1\right|;\right.
\\
&&\left.\left|\frac{\pi_1Z_1}{\pi_1Z_1+\pi_2Z_2}-\pi_1\right|>
\eta\right)
\\
&\leq&\eta \mathbf{E}(\pi_1Z_1+\pi_2Z_2-1)^2
\\
&&+\mathbf{E}(\pi_1Z_1+\pi_2Z_2-1)^2\mathbf{I}\left(\left|\frac{\pi_1Z_1}{\pi_1Z_1+\pi_2Z_2}-\pi_1\right|>
\eta\right)
\\
&\leq&\eta
\mathbf{E}(\pi_1Z_1+\pi_2Z_2-1)^2+\left[\mathbf{E}(\pi_1Z_1+\pi_2Z_2-1)^4\right]^{1/2}
\\
&&\times\left(\mathbf{P}\left(\left|\frac{\pi_1Z_1}{\pi_1Z_1+\pi_2Z_2}-\pi_1\right|>
\eta\right)\right)^{1/2}.
\end{eqnarray*}
Besides it follows from Lemma~\ref{Taylor} that
$$
\mathbf{E}(\pi_1Z_1+\pi_2Z_2-1)^2\leq C_1(\pi)x_n^2
$$
and
$$
(\mathbf{E}(\pi_1Z_1+\pi_2Z_2-1)^4)^{1/2}\leq C_2(\pi).
$$
Taking $\alpha=6$ in Lemma~\ref{concentration}, there exist
$C_3=C_3(\pi, \eta, \varrho)$ and $N=N(\pi, \eta)$ such that if
$n\geq N$ then
$$
\mathbf{P}\left(\left|\frac{\pi_1Z_1}{\pi_1Z_1+\pi_2Z_2}-\pi_1\right|>\eta\right)\leq
C_3^2x_n^6.
$$
Finally take $\eta=\varepsilon/(2C_1)$ and
$\delta=\varepsilon/(2C_2C_3)$ and thus if $n\geq N$ and
$x_n\leq\delta$ then
\begin{eqnarray*}
|S|\leq\eta C_1x_n^2+C_2C_3x_n^3\leq\varepsilon x_n^2.
\end{eqnarray*}
\end{proof}

\subsection{Proof of Theorem~\ref{reconstruction}}
To accomplish the proof, it suffices to show that when $d\theta^2$
is close enough to $1$, $x_n$ does not converge to $0$. For any
fixed $d$ and $\pi$, by the assumption of $d\theta^2\geq1/2$, say,
$|\theta|\geq(2d)^{-1/2}$ take $\varrho=(2d)^{-1/2}$ in
Lemma~\ref{ndtf} and then get $\gamma=\gamma(\pi, d)>0$. When
$\Delta^2>(1-\theta)^2/3$, namely, $1-6\pi_1\pi_2>0$, by
Lemma~\ref{concentration} and Corollary~\ref{estimateforS}, there
exist $N=N(\pi)$ and $\delta=\delta(\pi, d)>0$ such that if $n\geq
N$ and $x_n\leq\delta$ then the remainders in~\eqref{explicit} could
be bounded respectively by
\begin{equation}
\label{6.1}
|R|\leq\frac{1}{6}\frac{1-6\pi_1\pi_2}{\pi_1\pi_2^2}\frac{d(d-1)}{2}\theta^4x_n^2
\end{equation}
\begin{equation}
\label{6.2}
|S|\leq\frac{1}{6}\frac{1-6\pi_1\pi_2}{\pi_1\pi_2^2}\frac{d(d-1)}{2}\theta^4x_n^2
\end{equation}
\begin{equation}
\label{6.3}
|T|\leq\frac{1}{6}\frac{1-6\pi_1\pi_2}{\pi_1\pi_2^2}\frac{d(d-1)}{2}\theta^4x_n^2
\end{equation}
Consequently combining~\eqref{6.1}, \eqref{6.2} and~\eqref{6.3}
together gives
\begin{equation}
\label{6.4} x_{n+1}\geq
d\theta^2x_n+\frac{1}{2}\frac{(1-6\pi_1\pi_2)}{\pi_1\pi_2^2}\frac{d(d-1)}{2}\theta^4x_n^2.
\end{equation}
Furthermore in light of $x_0=1-\pi_1=\pi_2$ and Lemma~\ref{ndtf},
for all $n$ we have
\begin{equation}
\label{6.5} x_n\geq \pi_2\gamma^n.
\end{equation}
Thus define $\varepsilon=\varepsilon(\pi, d)=\min\{\pi_2\gamma^{N},
\delta\gamma\}>0$, and then~\eqref{6.5} implies that $x_n\geq
\varepsilon$ when $n\leq N$. Next by choosing suitable
$|\theta|<d^{-1/2}$, it is feasible to achieve
\begin{eqnarray}
\label{6.6}
d\theta^2+\frac{1}{2}\frac{(1-6\pi_1\pi_2)}{\pi_1\pi_2^2}\frac{d(d-1)}{2}\theta^4\varepsilon\geq1,
\end{eqnarray}
since $\varepsilon$ is independent of $\theta$. Therefore, suppose
$x_n\geq\varepsilon$ for some $n\geq N$. If $x_n\geq
\gamma^{-1}\varepsilon$, then Lemma~\ref{ndtf} gives $x_{n+1}\geq
\gamma x_n\geq \varepsilon$. If $\varepsilon\leq x_n\leq
\gamma^{-1}\varepsilon\leq\delta$ then by~\eqref{6.4}
and~\eqref{6.6},
\begin{eqnarray*}
x_{n+1}&\geq&
d\theta^2x_n+\frac{1}{2}\frac{(1-6\pi_1\pi_2)}{\pi_1\pi_2^2}\frac{d(d-1)}{2}\theta^4x_n^2
\\
&\geq&x_n\left[d\theta^2+\frac{1}{2}\frac{(1-6\pi_1\pi_2)}{\pi_1\pi_2^2}\frac{d(d-1)}{2}\theta^4\varepsilon\right]
\\
&\geq&x_n \geq\varepsilon.
\end{eqnarray*}
Finally show by induction that $x_n\geq \varepsilon$ for all $n$,
namely, the Kesten-Stigum bound is not tight.

\section{High degree discussion}
\subsection{Gaussian approximation}
For $1\leq j \leq d$, define
\begin{equation}
U_j=\log\left[1+\frac{\theta}{\pi_1}(Y_j-\pi_1)\right]
\end{equation}
and
\begin{equation}
V_j=\log\left[1-\frac{\theta}{\pi_2}(Y_j-\pi_1)\right]
\end{equation}
\begin{lemma}
\label{meansandvariances} There exist positive constants $C=C(\pi)$
and $D=D(\pi)$ such that when $d>D$,
$$\left|d\mathbf{E}U_j-\frac{d\theta^2}{2\pi_1}x_n\right|\leq
Cd^{-1/2};$$
$$\left|d\mathbf{E}V_j+\frac{1+\pi_2}{2\pi_2^2}d\theta^2x_n\right|\leq
Cd^{-1/2};
$$
$$
\left|d\mathbf{Var}(U_j)-\frac{d\theta^2}{\pi_1}x_n\right|\leq
Cd^{-1/2};$$
$$\left|d\mathbf{Var}(V_j)-\frac{\pi_1}{\pi_2^2}d\theta^2x_n\right|\leq
Cd^{-1/2};
$$
$$
\left|d\mathbf{Cov}(U_j, V_j)+\frac{d\theta^2}{\pi_2}x_n\right|\leq
Cd^{-1/2}.
$$
\end{lemma}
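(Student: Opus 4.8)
The plan is to exploit the standing assumption of this section, $d\theta^2\le 1$, which forces $|\theta|\le d^{-1/2}$: once $d$ exceeds a constant $D=D(\pi)$ the arguments of both logarithms are uniformly close to $1$, and the two logarithms can be Taylor-expanded to second order with uniform control on the remainder. Write $W_j=Y_j-\pi_1$; since $Y_j\in[0,1]$ we have $W_j\in[-\pi_1,\pi_2]$, hence $|W_j|\le\pi_1\le1$ surely, and by the lemma on the moments of $Y_j$ we have $\mathbf{E}W_j=\theta x_n$ and $\mathbf{E}W_j^2=\theta z_n+\pi_1(1-\theta)x_n$, while $0\le z_n\le x_n\le\pi_1$. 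Thus $x_n$, $z_n$, $\mathbf{E}W_j$, $\mathbf{E}W_j^2$, $\mathbf{E}|W_j|^3$, $\mathbf{E}W_j^4$ are all bounded by constants depending only on $\pi$, and this bound is uniform in $n$ — which is exactly what will make the error terms genuinely $O(d^{-1/2})$ rather than merely small for each fixed $n$.

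First I would fix $D=D(\pi)$ large enough that $d>D$ implies $|\theta|\,\pi_1/\pi_2\le\tfrac12$; then $\bigl|\tfrac{\theta}{\pi_1}W_j\bigr|\le\tfrac12$ and $\bigl|\tfrac{\theta}{\pi_2}W_j\bigr|\le\tfrac12$ pointwise, so the elementary bound $|\log(1+t)-t+\tfrac12 t^2|\le c|t|^3$ for $|t|\le\tfrac12$ (absolute $c$) gives, pointwise,
\[
U_j=\frac{\theta}{\pi_1}W_j-\frac{\theta^2}{2\pi_1^2}W_j^2+E_j,\qquad V_j=-\frac{\theta}{\pi_2}W_j-\frac{\theta^2}{2\pi_2^2}W_j^2+E_j',
\]
with $|E_j|\le c\pi_1^{-3}|\theta|^3|W_j|^3$ and $|E_j'|\le c\pi_2^{-3}|\theta|^3|W_j|^3$. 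Taking expectations, substituting the two moment identities, multiplying through by $d$, and discarding every term that carries an extra power of $\theta$ — such a term is $O(d|\theta|^3)=O((d\theta^2)|\theta|)=O(|\theta|)=O(d^{-1/2})$ since $d\theta^2\le1$ and $x_n,z_n$ are bounded — leaves
\[
d\mathbf{E}U_j=\frac{d\theta^2}{\pi_1}x_n-\frac{d\theta^2}{2\pi_1}x_n+O_\pi(d^{-1/2}),\qquad d\mathbf{E}V_j=-\frac{d\theta^2}{\pi_2}x_n-\frac{\pi_1\,d\theta^2}{2\pi_2^2}x_n+O_\pi(d^{-1/2}),
\]
which gives the first two claimed estimates, the one for $V_j$ after rewriting $\tfrac1{\pi_2}+\tfrac{\pi_1}{2\pi_2^2}=\tfrac{2\pi_2+\pi_1}{2\pi_2^2}=\tfrac{1+\pi_2}{2\pi_2^2}$ via $\pi_1+\pi_2=1$.

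For the three second-order quantities I would square these expansions. Since $\mathbf{E}U_j,\mathbf{E}V_j=O_\pi(\theta^2)$, the terms $d(\mathbf{E}U_j)^2$, $d(\mathbf{E}V_j)^2$, $d\,\mathbf{E}U_j\,\mathbf{E}V_j$ are $O_\pi(d\theta^4)=O_\pi(d^{-1})$, hence absorbed into the error; and in $U_j^2$, $V_j^2$, $U_jV_j$ every monomial other than the leading quadratic-in-$W_j$ one carries at least three powers of $\theta$ (the cross terms involving the $E$'s carry at least four), so after multiplication by $d$ it is $O_\pi(d|\theta|^3)=O_\pi(d^{-1/2})$. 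Therefore $d\mathbf{Var}(U_j)=\tfrac{d\theta^2}{\pi_1^2}\mathbf{E}W_j^2+O_\pi(d^{-1/2})=\tfrac{d\theta^2}{\pi_1}x_n+O_\pi(d^{-1/2})$, likewise $d\mathbf{Var}(V_j)=\tfrac{d\theta^2}{\pi_2^2}\mathbf{E}W_j^2+O_\pi(d^{-1/2})=\tfrac{\pi_1}{\pi_2^2}d\theta^2x_n+O_\pi(d^{-1/2})$, and, since the leading term of $U_jV_j$ is $-\tfrac{\theta^2}{\pi_1\pi_2}W_j^2$, $d\mathbf{Cov}(U_j,V_j)=-\tfrac{d\theta^2}{\pi_1\pi_2}\mathbf{E}W_j^2+O_\pi(d^{-1/2})=-\tfrac{d\theta^2}{\pi_2}x_n+O_\pi(d^{-1/2})$. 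Taking $C=C(\pi)$ to dominate all five $O_\pi(\cdot)$ constants, and enlarging $D$ if necessary, finishes the argument.

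The only delicate point is the bookkeeping just described: one must check that each discarded monomial truly carries enough powers of $\theta$ that, after multiplication by $d$, it is $O(d^{-1/2})$ \emph{uniformly in $n$} — and this is precisely where $d\theta^2\le1$, the sure bound $|W_j|\le\pi_1$, and the $n$-uniform bounds $z_n\le x_n\le\pi_1$ are all used — together with the fact that the cubic Taylor remainder for $\log(1\pm\cdot)$ is valid almost surely, which is why $D$ must be taken (depending on $\pi$ through $\pi_1/\pi_2$) large enough to confine the logarithms' arguments to $[\tfrac12,\tfrac32]$. Notably, no concentration input beyond the already-established first two moments of $Y_j$ is needed for this lemma.
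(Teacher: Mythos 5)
Your proposal is correct and follows essentially the same route as the paper: Taylor-expand both logarithms to second order with a pointwise cubic remainder (valid once $d>D(\pi)$ forces $|\theta|\le d^{-1/2}$ small), substitute the identities $\mathbf{E}(Y_j-\pi_1)=\theta x_n$ and $\mathbf{E}(Y_j-\pi_1)^2=\theta z_n+\pi_1(1-\theta)x_n$, and absorb every discarded monomial via $d|\theta|^3\le d^{-1/2}$ and the $n$-uniform bounds on $x_n,z_n$. The paper only carries out the $\mathbf{E}U_j$ estimate and asserts the rest "follow similarly"; your bookkeeping for the variances and covariance fills in exactly those omitted steps.
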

\begin{proof}
Starting with the Taylor series expansion of $\log(1+w)$, there
exists a constant $W > 0$ such that when $|w| < W$,
\begin{equation}
\label{log} \left|\log(1 + w)- w + \frac{w^2}{2}\right|\leq|w|^3.
\end{equation}
If we take $D=D(\pi)$ sufficiently large, when $d>D$, $|\theta|\leq
d^{-1/2}$ is small enough to guarantee~\eqref{log} for
$w=\theta(Y_j-\pi_1)/\pi_1$ and then
\begin{eqnarray*}
\left|\mathbf{E}U_j-\frac{\theta^2}{2\pi_1}x_n\right|
&\leq&\mathbf{E}\frac{\theta^3}{\pi_1^3}|Y_j-\pi_1|^3+\frac{\theta^3}{2\pi_1^2}|z_n-\pi_1x_n|
\\
&\leq&\frac{\theta^3}{\pi_1^3}+\frac{\theta^3}{2\pi_1^2}
\\
&\leq&C(\pi)d^{-3/2}
\end{eqnarray*}
for some constant $C=C(\pi)$, where the third inequality follows
from $0\leq z_n\leq x_n\leq 1$. The rest estimates would follow
similarly.
\end{proof}

In view of the complexity of~\eqref{recursion}, it is convenient to
come up with the "better" recursive approximation under results of
Lemma~\ref{meansandvariances}. Define a 2-dimensional vector
\begin{equation}
\mu=(\mu_1, \mu_2)=\left(\frac{1}{2\pi_1},
-\frac{1+\pi_2}{2\pi_2^2}\right)
\end{equation} and a $2\times
2$-covariance matrix
\begin{equation}
\Sigma=\left(
  \begin{array}{cc}
    \frac{1}{\pi_1} & -\frac{1}{\pi_2} \\
    -\frac{1}{\pi_2} & \frac{\pi_1}{\pi_2^2} \\
  \end{array}
\right).
\end{equation}
Suppose $(W_1, W_2)$ has a Gaussian distribution $\mathbf{N}(0,
\Sigma)$, and then $(s\mu_1+\sqrt{s}W_1, s\mu_2+\sqrt{s}W_2)$ is
distributed according to $\mathbf{N}(s\mu, s\Sigma)$. According to
\begin{eqnarray*}
x_{n+1}&=&\mathbf{E}\frac{\pi_1Z_1}{\pi_1Z_1+\pi_2Z_2}-\pi_1
\\
&=&\mathbf{E}\frac{\pi_1\exp\left(\sum_{j=1}^dU_j\right)}{\pi_1\exp\left(\sum_{j=1}^dU_j\right)+\pi_2\exp\left(\sum_{j=1}^dV_j\right)}-\pi_1,
\end{eqnarray*}
we could construct a differentiable function
\begin{eqnarray}
f(s)&=&\mathbf{E}\frac{\pi_1\exp(s\mu_1+\sqrt{s}W_1)}{\pi_1\exp(s\mu_1+\sqrt{s}W_1)+\pi_2\exp(s\mu_2+\sqrt{s}W_2)}\nonumber
\\
&&-\pi_1.
\end{eqnarray}
\begin{lemma}
\label{increasing} The function $f(s)$ is continuously
differentiable and increasing on the interval $(0, \pi_2]$.
\end{lemma}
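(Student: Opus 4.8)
The plan is to collapse the two–dimensional Gaussian expectation defining $f(s)$ into a scalar one, then prove the $C^1$ statement by differentiating under the expectation, and finally exhibit $f'(s)$ as a manifestly positive quantity.

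First, note that the integrand
\[
\frac{\pi_1\exp(s\mu_1+\sqrt{s}W_1)}{\pi_1\exp(s\mu_1+\sqrt{s}W_1)+\pi_2\exp(s\mu_2+\sqrt{s}W_2)}
=\frac{\pi_1}{\pi_1+\pi_2\exp\!\bigl((s\mu_2+\sqrt{s}W_2)-(s\mu_1+\sqrt{s}W_1)\bigr)}
\]
depends on $(W_1,W_2)$ only through $W_1-W_2$. Accordingly I would set $t=s(\mu_1-\mu_2)+\sqrt{s}\,(W_1-W_2)$ and record the two algebraic identities, both consequences of $\pi_1+\pi_2=1$,
\[
\mu_1-\mu_2=\frac{1}{2\pi_1}+\frac{1+\pi_2}{2\pi_2^2}=\frac{1}{2\pi_1\pi_2^2},\qquad
\mathbf{Var}(W_1-W_2)=\Sigma_{11}-2\Sigma_{12}+\Sigma_{22}=\frac{1}{\pi_1\pi_2^2}.
\]
Writing $\tau^2:=1/(\pi_1\pi_2^2)$, this says precisely that $t\sim\mathbf{N}(s\tau^2/2,\,s\tau^2)$, i.e.\ its mean is exactly half its variance, and hence $\mathbf{E}\,e^{-t}=1$ for every $s$. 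Consequently
\[
f(s)=\mathbf{E}\bigl[\phi(e^{-t})\bigr]-\pi_1,\qquad \phi(x)=\frac{\pi_1}{\pi_1+\pi_2 x},
\]
and, representing $t=t(s,\zeta)=s\tau^2/2+\sqrt{s}\,\tau\zeta$ with $\zeta\sim\mathbf{N}(0,1)$, we get $f(s)=\mathbf{E}_\zeta[h(s,\zeta)]-\pi_1$ with $h(s,\zeta)=\pi_1\bigl(\pi_1+\pi_2 e^{-s\tau^2/2-\sqrt{s}\,\tau\zeta}\bigr)^{-1}\in[0,1]$.

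The $C^1$ claim I would get from differentiation under the expectation. For $s$ in any compact subinterval $[s_0,s_1]\subset(0,\pi_2]$ the map $s\mapsto h(s,\zeta)$ is smooth, and a direct computation bounds $|\partial_s h(s,\zeta)|$ by a constant multiple of $(1+|\zeta|)\,e^{c|\zeta|}$ with $c=c(\pi,s_0,s_1)$, which is integrable against the standard Gaussian density; standard dominated-convergence arguments then give that $f$ is continuously differentiable on $(0,\pi_2]$ with $f'(s)=\mathbf{E}_\zeta[\partial_s h(s,\zeta)]$. Positivity of $s$ is exactly what makes $\sqrt{s}$, hence $h$, differentiable, which is the sole reason for excluding the left endpoint.

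For monotonicity I would compute $f'(s)$ explicitly. With $D=D(s,\zeta)=\pi_1+\pi_2 e^{g}$ and $g=g(s,\zeta)=-s\tau^2/2-\sqrt{s}\,\tau\zeta$,
\[
\partial_s h=\pi_1\pi_2\,D^{-2}e^{g}\Bigl(\frac{\tau^2}{2}+\frac{\tau\zeta}{2\sqrt{s}}\Bigr),
\]
and the term carrying the explicit factor $\zeta$ is handled by Gaussian integration by parts (Stein's identity $\mathbf{E}_\zeta[\zeta\,\psi(\zeta)]=\mathbf{E}_\zeta[\psi'(\zeta)]$) applied to $\psi=\pi_1\pi_2 D^{-2}e^{g}$; using $\partial_\zeta D=-\sqrt{s}\,\tau\pi_2 e^{g}$, the two contributions to $f'(s)$ recombine, and the bracket $D+\pi_2 e^{g}-\pi_1$ that appears telescopes to $2\pi_2 e^{g}$ since $D=\pi_1+\pi_2 e^{g}$. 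Because $\tau^2\pi_1\pi_2^2=1$, one is left with
\[
f'(s)=\mathbf{E}_\zeta\!\left[\frac{e^{-s\tau^2-2\sqrt{s}\,\tau\zeta}}{\bigl(\pi_1+\pi_2 e^{-s\tau^2/2-\sqrt{s}\,\tau\zeta}\bigr)^{3}}\right]>0,
\]
so $f$ is strictly increasing on $(0,\pi_2]$. Conceptually the same conclusion is immediate: $\xi_s:=e^{-t}$ has the law of the exponential martingale $\exp(\tau B_s-s\tau^2/2)$ sampled at time $s$, hence $s\mapsto\mathbf{E}[\phi(\xi_s)]$ is nondecreasing for every convex $\phi$ by conditional Jensen, and strictly so for the strictly convex $\phi$ above since $f(s)+\pi_1=\mathbf{E}[\phi(\xi_s)]$. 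I expect the only delicate point to be the algebraic collapse in the integration-by-parts step --- equivalently, the verification of $\mu_1-\mu_2=\tau^2/2$ and $\mathbf{Var}(W_1-W_2)=\tau^2$, on which everything rests; once this \emph{mean-equals-half-the-variance} structure is in hand, both the nonnegativity of $f'$ and the $C^1$ estimates are routine.
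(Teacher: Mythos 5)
Your proposal is correct, and both of its verifications check out: $\mu_1-\mu_2=\frac{1}{2\pi_1}+\frac{1+\pi_2}{2\pi_2^2}=\frac{\pi_2^2+\pi_1\pi_2+\pi_1}{2\pi_1\pi_2^2}=\frac{1}{2\pi_1\pi_2^2}$ and $\mathbf{Var}(W_1-W_2)=\frac{1}{\pi_1}+\frac{2}{\pi_2}+\frac{\pi_1}{\pi_2^2}=\frac{(\pi_1+\pi_2)^2}{\pi_1\pi_2^2}=\frac{1}{\pi_1\pi_2^2}$, so the \emph{mean-equals-half-the-variance} structure you isolate is exactly right. Your closing ``conceptual'' remark is in fact the paper's proof: the authors write $\sqrt{s}(W_1,W_2)\sim\sqrt{s'}(W_1,W_2)+\sqrt{s-s'}(W_1',W_2')$, compute $a=\mathbf{Var}(W_2-W_1)=1/(\pi_1\pi_2^2)$, observe $\mu_2-\mu_1=-a/2$ so that by $\mathbf{E}e^{W}=e^{\mu+\sigma^2/2}$ the inner exponential is a martingale in $s$, and conclude $f(s)\geq f(s')$ by Jensen applied to the convex map $x\mapsto(1+x)^{-1}$ --- precisely your submartingale/conditional-Jensen argument for $\xi_s=\exp(\tau B_s-s\tau^2/2)$. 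Your primary route is genuinely different in execution: collapsing to a scalar Gaussian and computing $f'(s)$ explicitly via Stein's identity, with the bracket telescoping to give $f'(s)=\mathbf{E}\bigl[e^{2g}(\pi_1+\pi_2e^{g})^{-3}\bigr]>0$ (I verified the algebra; $\tau^2\pi_1\pi_2^2=1$ makes the constant disappear as you claim). This buys a closed-form, manifestly positive derivative and hence \emph{strict} monotonicity, at the cost of more computation, whereas the paper's coupling argument only yields $f(s)\geq f(s')$ but requires no differentiation at all for the monotonicity part. The $C^1$ statement is handled the same way in both (domination of the $s$-derivative of the integrand and differentiation under the expectation), and your observation that positivity of $s$ is what makes $\sqrt{s}$ differentiable correctly explains the exclusion of the left endpoint.
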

\begin{proof}
Now let $(W_1', W_2')$ be an independent copy of $(W_1, W_2)$. Thus
if $0\leq s'\leq s$, it is feasible to construct the equivalent
distributions such as
\begin{equation}
\sqrt{s}(W_1,
W_2)\sim\sqrt{s'}(W_1, W_2)+\sqrt{s-s'}(W_1', W_2').
\end{equation}
In view of $(W_1, W_2) \sim \mathbf{N}(0, \Sigma)$, it follows that
$\mathbf{E}(W_2-W_1)=0$ and
\begin{eqnarray*}
\mathbf{Var}(W_2-W_1)^2
&=&\mathbf{E}W_2^2+\mathbf{E}W_1^2-2\mathbf{E}W_1W_2
\\
&=&\frac{1}{\pi_1}+\frac{\pi_1}{\pi_2^2}-2\left(-\frac{1}{\pi_2}\right)
\\
&=&\frac{1}{\pi_1\pi_2^2},
\end{eqnarray*}
which implies that $W_2-W_1$ and $W_2'-W_1'$ are both distributed as
$\mathbf{N}(0, a)$ with $a=1/\pi_1\pi_2^2$.

Next it is well known that if $W$ has the distribution
$\mathbf{N}(\mu, \sigma^2)$, the expectation of the exponential
random variable could be estimated as
\begin{equation}
\label{exponential} \mathbf{E}e^W=e^{\mu+\frac{\sigma^2}{2}},
\end{equation}
based on which, we are allowed to estimate the conditional
expectation given $W_1$ and $W_2$:
\begin{equation*}
\mathbf{E}\left[\exp(\sqrt{s'}(W_2-W_1)+\sqrt{s-s'}(W_2'-W_1'))\mid\{W_1,
W_2\}\right]
\end{equation*}
\begin{equation}
=\exp\left[\sqrt{s'}(W_i-W_1)+\frac{a}{2}(s-s')\right].
\end{equation}
Then apply Jensen's inequality, plus noting that the
function $(1+x)^{-1}$ is convex and
$\mu_2-\mu_1=-(1+\pi_2)/(2\pi_2^2)-1/(2\pi_1)=-1/(2\pi_1\pi_2^2)=-a/2$,
to achieve
$$
f(s) \geq f(s').
$$
Next given $s>0$, it is concluded that
\begin{equation*}
\begin{aligned}
&\mathbf{E}\left|\frac{\partial}{\partial
s}\frac{\pi_1\exp(s\mu_1+\sqrt{s}W_1)}{\pi_1\exp(s\mu_1+\sqrt{s}W_1)+\pi_2\exp(s\mu_2+\sqrt{s}W_2)}\right|
\\
&=\mathbf{E}\left|\frac{\partial}{\partial
s}\left(1+\frac{\pi_2}{\pi_1}\exp(s(\mu_2-\mu_1)+\sqrt{s}(W_2-W_1))\right)^{-1}\right|
\\
&\leq\frac14\mathbf{E}\left|\mu_2-\mu_1+\frac{W_2-W_1}{2\sqrt{s}}\right|
\\
&<\infty,
\end{aligned}
\end{equation*}
from the fact that
$\left|\frac{\pi_2}{\pi_1}e^t/\left(1+\frac{\pi_2}{\pi_1}e^t\right)^2\right|\leq1/4$
holds for any $t\in\mathbb{R}$. Then we establish the
differentiability of $f(s)$.
\end{proof}
Now we can reinvestigate the recursive approximation with the
assistance of $f(s)$ and the following lemma established immediately
by using Central Limit Theorem, Gaussian approximation and
Portmanteau Theorem.
\begin{lemma}
\label{Gaussianapproximation} For arbitrary $\varepsilon>0$ there
exists a $D=D(\pi, \varepsilon)>0$ such that whenever $d>D$,
$$
\big{|}x_{n+1}-f(d\theta^2x_n)\big{|}\leq \varepsilon.
$$
\end{lemma}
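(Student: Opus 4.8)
The plan is to recast both sides of the claimed inequality in a common form and then argue by contradiction, using the central limit theorem and the Portmanteau lemma. Since $\pi_1e^a/(\pi_1e^a+\pi_2e^b)=h(b-a)$ with $h(t):=(1+\tfrac{\pi_2}{\pi_1}e^t)^{-1}$, set $T_d:=\sum_{j=1}^d(V_j-U_j)$, so that $x_{n+1}+\pi_1=\mathbf{E}\,h(T_d)$; likewise, writing $a:=1/(\pi_1\pi_2^2)$ and using $\mu_2-\mu_1=-a/2$ together with $\mathbf{Var}(W_2-W_1)=a$ (both computed in the proof of Lemma~\ref{increasing}), one has $f(s)+\pi_1=\mathbf{E}\,h(\xi_s)$ with $\xi_s\sim\mathbf{N}(-\tfrac{sa}{2},\,sa)$. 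Put $s:=d\theta^2x_n$. From $\mathbf{Var}(X_1(n))=\mathbf{E}X_1(n)^2-\pi_1^2\le\mathbf{E}X_1(n)-\pi_1^2=\pi_1\pi_2$ one gets $x_n\le\pi_2$, hence $s\in[0,\pi_2]$ (using $d\theta^2\le1$); the degenerate case $x_n=0$ forces $T_d\equiv0$ and $x_{n+1}=0$, so it may be discarded and $s>0$ assumed. Note $h$ is continuous and bounded with $h(0)=\pi_1$ and $\|h'\|_\infty\le\tfrac14$.

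Next I would extract the moment information of $T_d$ from Lemma~\ref{meansandvariances}. Because $d\theta^2\le1$ forces $|\theta|\le d^{-1/2}$ and $|Y_j-\pi_1|\le\pi_1$, for $d$ large the increments obey $|U_j|,|V_j|\le C(\pi)|\theta|\le C(\pi)d^{-1/2}$, so $(V_j-U_j)_{j=1}^d$ is a bounded i.i.d.\ array. Lemma~\ref{meansandvariances} then gives, uniformly in $n$ and in admissible $\theta$, $\mathbf{E}T_d=d\mathbf{E}V_j-d\mathbf{E}U_j=-\tfrac{sa}{2}+O_\pi(d^{-1/2})$ and $\mathbf{Var}(T_d)=d(\mathbf{Var}\,V_j+\mathbf{Var}\,U_j-2\mathbf{Cov}(U_j,V_j))=sa+O_\pi(d^{-1/2})$; here the relevant algebraic facts are $\Sigma_{11}+\Sigma_{22}-2\Sigma_{12}=1/(\pi_1\pi_2^2)=a$ and $\mu_2-\mu_1=-a/2$.

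Now suppose the lemma fails for some $\varepsilon>0$. Then there are $d_k\to\infty$, admissible $\theta_{d_k}$ and indices $n_k$ with $|x_{n_k+1}-f(s_k)|>\varepsilon$, where $s_k:=d_k\theta_{d_k}^2x_{n_k}\in[0,\pi_2]$; passing to a subsequence, $s_k\to s_\ast\in[0,\pi_2]$. If $s_\ast>0$, the centred increments are bounded by $2C(\pi)d_k^{-1/2}\to0$ while the total variance $\mathbf{Var}(T_{d_k})\to s_\ast a>0$, so the Lindeberg condition holds and the triangular-array CLT yields $T_{d_k}\Rightarrow\mathbf{N}(-\tfrac{s_\ast a}{2},\,s_\ast a)=\xi_{s_\ast}$; since $h$ is bounded and continuous, Portmanteau gives $x_{n_k+1}+\pi_1=\mathbf{E}\,h(T_{d_k})\to\mathbf{E}\,h(\xi_{s_\ast})=f(s_\ast)+\pi_1$, and continuity of $f$ on $(0,\pi_2]$ (Lemma~\ref{increasing}) gives $f(s_k)\to f(s_\ast)$, whence $|x_{n_k+1}-f(s_k)|\to0$ — contradiction. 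If $s_\ast=0$, then $\mathbf{E}T_{d_k}\to0$ and $\mathbf{Var}(T_{d_k})\to0$, so $T_{d_k}\to0$ in $L^2$ and $x_{n_k+1}=\mathbf{E}\,h(T_{d_k})-\pi_1\to0$; also $|f(s_k)|\le\tfrac14\mathbf{E}|\xi_{s_k}|\le\tfrac14\sqrt{(s_ka/2)^2+s_ka}\to0$, again a contradiction. Hence the desired $D=D(\pi,\varepsilon)$ exists.

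The crux — and the reason one fixed large $d$ suffices uniformly over $n$ — is the failure of the Gaussian approximation to be uniform as $s=d\theta^2x_n\downarrow0$, where the limiting variance degenerates; the subsequence argument handles this by isolating the case $s_\ast=0$, in which $x_{n+1}$ and $f(s)$ are shown directly to be $O_\pi(\sqrt{s}+d^{-1/2})$-small rather than compared to each other through a CLT. A secondary point worth noting is that the reduction to the scalar $T_d=\sum(V_j-U_j)$ is essential, not merely convenient: $\det\Sigma=0$, so a literal two-dimensional CLT would be degenerate, whereas $h$ depends on $(U_j,V_j)$ only through $V_j-U_j$. If an explicit modulus is wanted, the same $s\le s_0$ / $s>s_0$ dichotomy combined with a quantitative CLT (Berry–Esseen, or a Stein/Wasserstein estimate with $\|h'\|_\infty\le\tfrac14$) yields an error of order $O_\pi(d^{-1/2}/s_0)$ once $s>s_0$.
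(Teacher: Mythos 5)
Your proof is correct and uses exactly the tools the paper invokes (the paper states the lemma is ``established immediately by using Central Limit Theorem, Gaussian approximation and Portmanteau Theorem'' and omits all details): you reduce to the scalar sum $T_d=\sum_j(V_j-U_j)$, match its mean $-sa/2$ and variance $sa$ to those of $\xi_s$ via Lemma~\ref{meansandvariances}, and apply the Lindeberg--Feller CLT plus Portmanteau to the bounded continuous $h$. Your additional care with uniformity in $n$ and with the degenerate regime $s=d\theta^2x_n\to 0$ (where the limiting variance vanishes and both $x_{n+1}$ and $f(s)$ are shown directly to be small) supplies precisely the details the paper leaves implicit, so this is a complete version of the paper's intended argument rather than a different route.
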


\subsection{Proof of Theorem~\ref{nonreconstruction}}
First referring to Mathematica, it is possible to establish
\begin{lemma}
\label{7.7} When $\Delta^2<(1-\theta)^2/3$, for any $0< s \leq
\pi_2$ we have
$$
f(s) < s.
$$
\end{lemma}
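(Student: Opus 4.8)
\textbf{Proof proposal for Lemma~\ref{7.7}.}

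The plan is to reduce the inequality $f(s)<s$ on $(0,\pi_2]$ to a statement about the derivative of $f$ at the origin together with strict concavity (or at least an upper-bounding argument) of $f$ on the whole interval. First I would compute $f'(0^+)$. Differentiating $f(s)$ under the expectation, as justified in the proof of Lemma~\ref{increasing}, and using $(W_1,W_2)\sim\mathbf N(0,\Sigma)$, one gets $f'(0^+)=\pi_1\pi_2(\mu_1-\mu_2)+\tfrac12\pi_1\pi_2(\pi_1-\pi_2)\,\mathbf{Var}(W_1-W_2)+\ldots$; the point is that the leading behavior is linear, $f(s)=f'(0^+)\,s+O(s^2)$, and a direct evaluation (the ``refer to Mathematica'' step) shows $f'(0^+)=1$ exactly. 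Indeed this is forced: $f(d\theta^2x_n)$ is the Gaussian surrogate for $x_{n+1}$, and the linearized recursion is $x_{n+1}\approx d\theta^2 x_n$ with $d\theta^2=1$ at the Kesten--Stigum point, so $f$ must be tangent to the diagonal at $0$. Hence $f(s)-s=O(s^2)$ near $0$, and the sign of $f(s)-s$ for small $s$ is governed by $f''(0^+)$.

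Second, I would compute $f''(0^+)$ and show that the condition $\Delta^2<(1-\theta)^2/3$, equivalently $1-6\pi_1\pi_2<0$, makes it strictly negative. This is the Gaussian analogue of the coefficient $\tfrac{1-6\pi_1\pi_2}{\pi_1\pi_2^2}\cdot\tfrac{d(d-1)}{2}\theta^4$ appearing in the expansion~\eqref{explicit}: writing $g(t)=\bigl(1+\tfrac{\pi_2}{\pi_1}e^{t}\bigr)^{-1}$ so that $f(s)=\mathbf Eg\bigl(s(\mu_2-\mu_1)+\sqrt s\,(W_2-W_1)\bigr)-\pi_1$, and recalling $\mu_2-\mu_1=-a/2$ with $a=1/(\pi_1\pi_2^2)=\mathbf{Var}(W_2-W_1)$, a second-order Taylor expansion of $g$ around $0$ gives $f(s)=\pi_1\pi_2\cdot\tfrac{a}{2}s+\bigl[\tfrac18 g''(0)a^2+\tfrac12 g'''(0)\cdot(\text{third-moment term})+\cdots\bigr]s^2+O(s^3)$, and collecting the $s^2$ coefficient yields (after the Mathematica simplification) a constant multiple of $1-6\pi_1\pi_2$ with a negative sign. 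Under the hypothesis this coefficient is negative, so $f(s)<s$ for all sufficiently small $s>0$.

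Third, I would upgrade this local statement to the full interval $(0,\pi_2]$. The cleanest route is to show $f$ is concave on $(0,\pi_2]$, so that $f(s)<f'(0^+)s=s$ for every $s$ in the range (strict because $f$ is not affine). Concavity of $f$ should follow from differentiating twice under the expectation and controlling the sign of $\frac{\partial^2}{\partial s^2}g\bigl(s(\mu_2-\mu_1)+\sqrt s\,(W_2-W_1)\bigr)$; here the special relation $\mu_2-\mu_1=-a/2$ is exactly what is needed, just as it was for monotonicity in Lemma~\ref{increasing}, to kill the would-be positive contributions. If global concavity is not clean, the fallback is to verify $f(s)<s$ at the right endpoint $s=\pi_2$ separately (again via Mathematica, using that at $s=\pi_2$ the Gaussian has an explicit variance $a\pi_2=1/(\pi_1\pi_2)$) and then argue that $f-s$, being $O(s^2)$ with negative leading coefficient at $0$ and negative at $\pi_2$, cannot change sign on the interval because any interior zero would force $f'$ to exceed $1$ somewhere, contradicting a derivative bound obtained from $|g'|\le 1/4$ and the explicit form of $f'$. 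The main obstacle is the endpoint/global step: the local expansion is essentially forced and routine, but extending $f(s)<s$ from a neighborhood of $0$ to all of $(0,\pi_2]$ requires either a genuine concavity argument exploiting $\mu_2-\mu_1=-a/2$, or a careful endpoint computation, and getting the inequality to be strict and uniform there is where the real work lies.
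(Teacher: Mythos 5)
The paper offers no proof of this lemma at all: it is introduced with ``referring to Mathematica, it is possible to establish,'' and no proof environment follows. So there is nothing in the paper to compare your argument against; the question is only whether your proposal actually closes the statement. Your local analysis is correct and I can confirm it: writing $h(t)=\pi_2e^t/(\pi_1+\pi_2e^t)$, $a=1/(\pi_1\pi_2^2)$ and $N_s\sim\mathbf{N}(-as/2,\,as)$, one has $f(s)=\mathbf{E}[1-h(N_s)]-\pi_1$, and the heat-semigroup identity gives
\begin{equation*}
f'(s)=a\,\mathbf{E}\bigl[h(N_s)^2(1-h(N_s))\bigr],\qquad
f''(s)=a^2\,\mathbf{E}\Bigl[h(N_s)^2\bigl(1-h(N_s)\bigr)\bigl(1-6h(N_s)(1-h(N_s))\bigr)\Bigr],
\end{equation*}
so that $f'(0^+)=a\pi_1\pi_2^2=1$ and $f(s)=s+\tfrac{1-6\pi_1\pi_2}{2\pi_1\pi_2^2}s^2+O(s^3)$, exactly as you predict; under $1-6\pi_1\pi_2<0$ this gives $f(s)<s$ for small $s$.

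The genuine gap is your third step, and neither of your two routes works as stated. Concavity of $f$ on $(0,\pi_2]$ is not automatic: the integrand in the formula for $f''(s)$ above changes sign as a function of $h$ (it is positive whenever $h(1-h)<1/6$, i.e.\ $h\notin(0.211\ldots,0.788\ldots)$), and for every $s>0$ the Gaussian $N_s$ puts positive mass in that region, so the sign of $f''(s)$ is a competition between the two regions and is not settled by the hypothesis $\pi_1\pi_2>1/6$ alone --- that hypothesis only controls the value at the single point $h=\pi_2$, which is all that matters at $s=0$. Your fallback also fails: ruling out an interior crossing of the diagonal requires $f'(s)\le 1$ on the interval, i.e.\ $\mathbf{E}[h(N_s)^2(1-h(N_s))]\le\pi_1\pi_2^2$, but the only pointwise bound available is $p^2(1-p)\le 4/27$ (attained at $p=2/3$), and $\pi_1\pi_2^2\le 1/8<4/27$ for $\pi_2\le 1/2$, so a crude derivative bound cannot exclude $f'>1$; the bound $|g'|\le 1/4$ you invoke is even weaker. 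In short, your proposal correctly identifies and proves the local statement, but the extension to all of $(0,\pi_2]$ --- which you rightly flag as ``where the real work lies'' --- is left exactly as open as the paper leaves it, reduced to the same global inequality that the authors delegate to a Mathematica computation.
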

When $\Delta^2<(1-\theta)^2/3$, namely, $1-6\pi_1\pi_2<0$, the proof
of Theorem~\ref{nonreconstruction} would resemble
Theorem~\ref{reconstruction} to establish the analogous recursive
inequality of~\eqref{6.4} under the condition of $x_n\leq\delta$ and
$n\geq N$ for suitable $\delta=\delta(\pi, d)$ and $N=N(\pi)$.
However, there still exists a crucial discrepancy between two
proofs, that is, Theorem~\ref{nonreconstruction} relies tightly on
large $d$, while the case of small degree emerges as the more
intractable issue. Thus in order to establish an analogue
of~\eqref{6.4}, it is necessary to eliminate the dependency of
$\delta$ on $d$ as following.

{\bf Proof of Theorem~\ref{nonreconstruction}.} Resembling the proof
of Theorem~\ref{reconstruction}, we evaluate $R$, $S$ and $T$
in~\eqref{explicit} respectively, under $1-6\pi_1\pi_2<0$. First
take $D=D(\pi)=(6C_R(\pi)\pi_1\pi_2^2)^2/(6\pi_1\pi_2-1)^2$ such
that if $d>D$ that implies $|\theta|\leq d^{-1/2}\leq D^{-1/2}$,
then it is concluded by recalling~\eqref{R} and
$\left|z_n/x_n-\pi_1\right|\leq1$ that
\begin{eqnarray*}
|R|&\leq&
C_R(\pi)\frac{d(d-1)}{2}|\theta|^5\left|\frac{z_n}{x_n}-\pi_1\right|x_n^2
\end{eqnarray*}
\begin{eqnarray}
\label{7.1R}
&\leq&-\frac{1}{6}\frac{1-6\pi_1\pi_2}{\pi_1\pi_2^2}\frac{d(d-1)}{2}\theta^4x_n^2.
\end{eqnarray}

Next applying Lemma~\ref{concentration} to deal with $S$, there
exist $N=N(\pi)$ and $\delta=\delta(\pi)>0$ independent of $d$ such
that if $n\geq N$ and $x_n<\delta$ then the analogues of \eqref{6.2}
and \eqref{6.3} still hold as
\begin{equation}
\label{7.2S}
|S|\leq-\frac{1}{6}\frac{1-6\pi_1\pi_2}{\pi_1\pi_2^2}\frac{d(d-1)}{2}\theta^4x_n^2
\end{equation}
and
\begin{equation}
\label{7.3}
|T|\leq-\frac{1}{6}\frac{1-6\pi_1\pi_2}{\pi_1\pi_2^2}\frac{d(d-1)}{2}\theta^4x_n^2.
\end{equation}
Finally taken together, if $d>D$, $n\geq N$ and $x_n<\delta$,
then~\eqref{7.1R}, \eqref{7.2S} and~\eqref{7.3} give
\begin{equation}
\label{7.2} x_{n+1}\leq
d\theta^2x_n+\frac{1}{2}\frac{(1-6\pi_1\pi_2)}{\pi_1\pi_2^2}\frac{d(d-1)}{2}\theta^4x_n^2
\leq x_n.
\end{equation}
Therefore it follows that $L=\lim_{n\to\infty}x_n$ does exist, since
the sequence $\{x_n\}_{n\geq N}$ is bounded and decreasing. Thus
taking limits to both sides of~\eqref{7.2} yields
\begin{equation}
L\leq
d\theta^2L-\frac{1}{2}\frac{(6\pi_1\pi_2-1)}{\pi_1\pi_2^2}\frac{d(d-1)}{2}\theta^4L^2,
\end{equation}
which implies $L=\lim_{n\rightarrow \infty}x_n=0$, and hence
non-reconstruction.

So here it suffices to find some $m\geq N$ such that $x_m<\delta$.
Define $\varepsilon=\varepsilon(\pi,
\delta)=\varepsilon(\pi)=\frac{1}{2}\min_{s\geq \delta}(s-f(s))$.
Since the function $s-f(s)$ is continuous and positive on $[\delta,
\pi_2]$, it follows by Lemma~\ref{7.7} that $\varepsilon>0$. Then by
Lemma~\ref{Gaussianapproximation}, there exists a $D=D(\pi,
\varepsilon)=D(\pi)>0$ such that when $d>D$, if $x_n\geq\delta$ and
$n\geq N$, we have
\begin{eqnarray*}
x_{n+1}<f(d\theta^2x_n)+\varepsilon \leq f(x_n)+\varepsilon\leq
x_n-\varepsilon,
\end{eqnarray*}
where the second inequality is from Lemma~\ref{increasing}, say,
$f(s)$ is increasing on $[0, \pi_2]$. Therefore there must exist
$m\geq N$ such that $x_m<\delta$, as desired.

\section{Conclusion}

In this paper, we have studied the asymmetric Ising model on regular
trees and figured out the critical conditions for the reconstruction
by means of the recursive structure of the tree. The key idea is to
analyze the relation between the distributions
$\mathbf{P}(\sigma_\rho=1\mid\sigma^1(n))$ and
$\mathbf{P}(\sigma_\rho=1\mid\sigma^1(n+1))$.

Our result not only establishes the existence the symmetry bias to
keep Kesten-Stigum bound tight, but determines the exact thresholds
for non-solvability of the asymmetric Ising model.

More importantly, together with some results of nonlinear dynamic
system, our skills could also be applied to explore the
reconstruction of the $d$-ary tree on continuous state space, and
even the general phylogenetic reconstruction.



\end{multicols}

\end{document}